\numberwithin{equation}{section}
\newtheorem{theorem}{Theorem}[section]
\newtheorem{lemma}[theorem]{Lemma}
\newtheorem{proposition}[theorem]{Proposition}
\newtheorem{remark}[theorem]{Remark}
\theoremstyle{definition}
\newtheorem{problem}[theorem]{Problem}
\theoremstyle{remark}
\begin{document}
\title{  Transonic shocks  for  three-dimensional axisymmetric    Euler flows  with  an  external force   in a perturbed  cylinder}
\author{
Zihao Zhang\thanks{School of Mathematics, Jilin University, Changchun, Jilin Province, China, 130012. Email: zhangzihao@jlu.edu.cn}}
\date{}

\newcommand{\de}{{\mathrm{d}}}
\def\div{{\rm div\,}}
\def\curl{{\rm curl\,}}
\newcommand{\ro}{{\rm rot}}
\newcommand{\sr}{{\rm supp}}
\newcommand{\sa}{{\rm sup}}
\newcommand{\va}{{\varphi}}
\newcommand{\me}{\mathcal{M}}
\newcommand{\ml}{\mathcal{V}}
\newcommand{\mi}{\mathcal{N}}
\newcommand{\md}{\mathcal{D}}
\newcommand{\mg}{\mathcal{G}}
\newcommand{\mh}{\mathcal{H}}
\newcommand{\mf}{\mathcal{F}}
\newcommand{\ms}{\mathcal{S}}
\newcommand{\mt}{\mathcal{T}}
\newcommand{\mn}{\mathcal{N}}
\newcommand{\ma}{\mathcal{L}}
\newcommand{\mb}{\mathcal{P}}
\newcommand{\mj}{\mathcal{J}}
\newcommand{\n}{\nabla}
\newcommand{\h}{\hat}
%\begin{abstract}
 \newcommand{\q}{{\rm R}}
%\end{abstract}
\newcommand{\p}{{\partial}}
\renewcommand\figurename{\scriptsize Fig}
\pagestyle{myheadings} \markboth{  Three-dimensional axisymmetric   transonic shock  flows  with  an  external force}{Three-dimensional axisymmetric   transonic shock  flows with  an  external force}\maketitle
\begin{abstract}
   We establish the existence and stability of the  transonic shock solution to  three-dimensional  axisymmetric Euler
system with an external force in a cylinder   under   perturbations of the incoming supersonic flow,  the exit pressure, the external force and the nozzle wall.
  The external force has  a
stabilization effect on the transonic shock in the  straight cylinder and the shock position is uniquely determined.   The main difficulties
for the axisymmetric flows  are  the corner singularities near the intersection point of the shock surface and the nozzle wall and the singularity near the symmetry axis.  An  invertible modified Lagrangian transformation  is  introduced to  overcome these difficulties and   straighten
the streamline.  One of the key elements in the analysis is to
   decompose the hyperbolic and elliptic modes for the steady axisymmetric Euler system with an external force in terms of the deformation and vorticity.   Another one is an equivalent reformulation of the Rankine-Hugoniot conditions so that the
shock front is uniquely determined by an algebraic equation.
\end{abstract}
\begin{center}
\begin{minipage}{5.5in}
Mathematics Subject Classifications 2020: 35L65, 35L67, 76H05, 76N15.\\
Key words:  transonic shocks,   the modified Lagrangian transformation, the deformation-curl decomposition, Rankine-Hugoniot conditions.
\end{minipage}
\end{center}
\section{Introduction and main results}\noindent
\par In this paper, we  study  the  transonic shock problem for three-dimensional axisymmetric    Euler flows
of isentropic   polytropic gases in a  perturbed cylinder under the external force. Assume the flow enters the nozzle with a supersonic state and leaves it with a relatively high pressure, then it is expected that a shock front occurs in the nozzle such that the flow pressure rises to coincide with the pressure at the exit. Then catching the position of the shock front is one of the  important ingredients in determining the flow field in the nozzle. This paper shows  that the external force has  a
stabilization effect on the transonic shocks  in  the straight cylinder  and the shock position is uniquely determined. Then we further investigate the structural
stability of   transonic shock solutions under axisymmetric perturbations of the incoming supersonic flow,  the exit pressure, the external force and the nozzle wall.

\par  Three-dimensional steady Euler flows  with an external force  is governed by the
 following system:
  \begin{equation}\label{1-1}
\begin{cases}
\p_{x_1}(\rho u_1)+\p_{x_2}(\rho u_2)+\p_{x_3}(\rho u_3)=0,\\
\p_{x_1}(\rho u_1^2+P)+\p_{x_2}(\rho u_1u_2)+\p_{x_3}(\rho u_1u_3)=\rho \p_{x_1}\Phi,\\
\p_{x_1}(\rho u_1u_2)+\p_{x_2}(\rho u_2^2+P)+\p_{x_3}(\rho u_2u_3)=\rho \p_{x_2}\Phi,\\
\p_{x_1}(\rho u_1u_3)+\p_{x_2}(\rho u_2u_3)+\p_{x_3}(\rho u_3^2+P)=\rho \p_{x_3}\Phi,\\
\end{cases}
\end{equation}
where $ \textbf{u} = (u_1, u_2,u_3) $ is the velocity field, $ \rho $ is the density, $ P $ is the pressure and  $\Phi $ is the potential force, respectively.  We  consider the  isentropic  polytropic gases, therefore the equation of state is given by $ P=A\rho^{\gamma} $, where  $ A $ is a positive constant and $ \gamma $ is the adiabatic constant with $ \gamma> 1 $. For convenience, we take $A=1$ in
this paper. Denote the  sound speed by  $ c(\rho)=\sqrt{P^{\prime}(\rho)}$. It is well-known that  system \eqref{1-1} is  hyperbolic  for supersonic flows (i.e. $ |\textbf{u}|>c(\rho) $) and  hyperbolic-elliptic mixed  for subsonic flows (i.e. $ |\textbf{u}|<c(\rho) $).
\par The stability analysis of  transonic shock solutions  in a flat nozzle have been studied extensively. For steady flows with shocks in  finitely and infinitely long  flat nozzles, there exists a class of transonic shock solutions with both upstream supersonic state and downstream subsonic state being constant and its shock position being arbitrary. The structural stability of these transonic shocks for steady potential flows in nozzles was studied in \cite{CGF03,CFM04,CQM07,XY05, YX08}. The authors in  \cite{CCS06,CCM07} established the existence of transonic shocks to steady Euler flows in two-dimensional nozzles with slowly varying
cross-sections.  The existence and stability of the transonic shock for  two-dimensional and three-dimensional steady Euler flows in flat or almost flat nozzles  with the prescribed pressure at the exit up to a constant were studied in \cite{CS05,XYY09} and
 \cite{CS08,CY08}.
 Both existence results are established under the assumption
that the shock front passes through a given point. Recently,  without such an artificial assumption, the authors in \cite{FX21}  established the stability and existence of transonic shock solutions to the
two-dimensional steady compressible Euler system in an almost flat finite nozzle with the exit pressure, where the shock position was uniquely determined. This was generalized to three-dimensional axisymmetric case in \cite{FG21}.
 \par On the other hand, there were many studies on the stability of the radially symmetric transonic shock in a divergent nozzle. %The authors in \cite{BF11} studied the
%stability of transonic shocks for multidimensional steady potential flows in divergent nozzles.
The well-posedness of the transonic shock problem in two-dimensional divergent nozzles under the perturbations of the exit pressure was first established    in \cite{LXY09}  when the opening
angle of the nozzle is suitably small. This restriction was removed in  \cite{LYX09} and the transonic shock in
a two-dimensional straight divergent nozzle is shown in \cite{LXY13} to be structurally stable under the
perturbations of the nozzle walls and the exit pressure. The existence and stability of
three-dimensional axisymmetric transonic shock flows  in a conic nozzle were studied in \cite{LYX10,LXY10,WXX21,ZZ22}. In \cite{WXX21}, the authors  introduced a
modified Lagrangian transformation     to deal
with the corner singularities near the intersection points of the shock surface and nozzle boundary
and the artificial singularity near the axis simultaneously.  The stability of spherically symmetric transonic shocks in a spherical shell was studied in \cite{LXY16} by requiring that the background transonic shock solutions satisfy some ``Structure Conditions". Recently, the authors in \cite{WX23} had made a substantial progress and established the existence and stability of cylindrical transonic shock solutions under three-dimensional perturbations of the incoming flows and the exit pressure without any restriction on the background transonic shock solutions.
\par Let $ L_1, L_2(>L_1) $ be fixed positive constants.  The straight cylinder (Fig 1)  is given  by
\begin{equation*}
 \mi_b:=\{(x_1,x_2,x_3)\in \mathbb{R}^3: L_1<x_1<L_2, 0\leq x_2^2+x_3^2<1\}.
 \end{equation*}
 \begin{figure}
  \centering
  % Requires \usepackage{graphicx}
  \includegraphics[width=9cm,height=4cm]{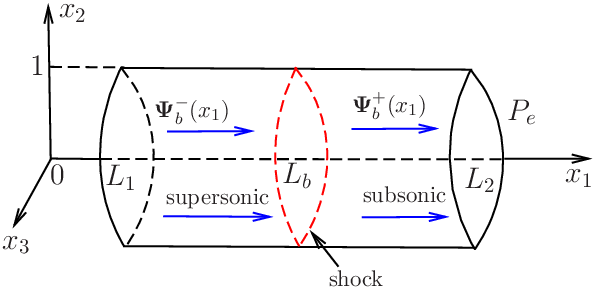}
  \caption{ The transonic shock flows in the straight cylinder}
\end{figure}
We first consider  one-dimensional steady Euler system with an external force in  $\mi_b$, which is governed by
 \begin{equation}\label{1-2}
 \begin{cases}
 (\rho_b u_b)^{\prime}(x_1)=0, \\
  (\rho_b u_bu_b^{\prime})(x_1)+P_b^{\prime}(x_1)=(\rho_b g)(x_1),\\
  {\rho_b}(L_1)=\bar\rho>0,\ \ {u_b}(L_1)= \bar u>0,\\
        {P_b} (L_2) = P_e,
\end{cases}
  \end{equation}
 where  the flow state at the entrance $x_1=L_1$ is supersonic, i.e., $\bar u^2>c^2(\bar\rho)=\gamma
\bar\rho^{\gamma-1}$. By employing the monotonicity relation between the shock position and the end pressure,    the following Lemma was established   in \cite{WY22}  shows that    there is a unique transonic shock solution to \eqref{1-2} when the end pressure is a suitably prescribed constant $P_e$ and $g (x_1) > 0$ for any $ x_1 \in [L_1, L_2]$. Meanwhile, it is shown that the external force has  a
stabilization effect on the transonic shock in the cylinder and the shock position is uniquely determined.
\begin{lemma}
  Suppose that the initial state $(\bar \rho,\bar u)$
at $x_1=L_1$ is supersonic and the external force $g$ satisfying $ g(x_1)>0 $ for any $x_1 \in [L_1, L_2]$, there exist two positive constants $P_1,P_2$ such that if the end pressure $P_e\in (P_1, P_2)$, there exists a unique piecewise transonic shock solution
\begin{equation}\label{1-3}
\mathbf{\Psi}_b(\mathbf{x})=(\mathbf{u}_b,P_b)(\mathbf{x})=
\begin{cases}
\mathbf{\Psi}_b^-(\mathbf{x}):=(u_b^-(x_1),0,0,P_b^{-}(x_1)),\ \ {\rm{if}}\ L_1<x_1<L_b,\\
\mathbf{\Psi}_b^+(\mathbf{x}):=(u_b^+(x_1),0,0,P_b^{+}(x_1)),\ \ {\rm{if}}\ L_b<x_1<L_2,\\
\end{cases}
\end{equation}
with a shock front located at $ x_1=L_b\in(L_1,L_2) $. Across the shock, the following Rankine-Hugoniot conditions and entropy condition are satisfied:
 \begin{equation*}
 \begin{cases}
  [{ \rho_b} { u}_b](L_b)=0,\\
	[{\rho}_b u_b^2+P_b](L_b)=0,\\
[P_b](L_b)>0.
\end{cases}
 \end{equation*}
 Moreover, the shock position $ x_1=L_b $ increases as the exit pressure $ P_e $ decreases. In addition, the shock position $ x_1=L_b $ approaches to $ L_1 $ if $ P_e $ goes to $ P_2 $ and $ x_1=L_b $ approaches to $ L_2 $ if $ P_e $ goes to $ P_1 $.
\end{lemma}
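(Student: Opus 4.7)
The plan is a shooting argument in the shock position $s\in(L_1,L_2)$. First I would reduce to a scalar ODE: since $(\rho_b u_b)'=0$, set $m_0:=\bar\rho\bar u$ so that $\rho_b=m_0/u_b$ and $P_b=(m_0/u_b)^\gamma$; the momentum equation becomes
\[
u_b'(x_1)=\frac{u_b(x_1)\,g(x_1)}{u_b^2(x_1)-c^2(\rho_b(x_1))},\qquad c^2(\rho_b)=\gamma m_0^{\gamma-1}u_b^{1-\gamma}.
\]
Because $g>0$, a supersonic profile is strictly increasing in $x_1$ and a subsonic profile is strictly decreasing; in particular the supersonic branch $(u_b^-,\rho_b^-,P_b^-)$ stays bounded away from the sonic line and extends smoothly to all of $[L_1,L_2]$, and for any candidate shock position $s$ an admissible post-shock state generates a subsonic solution on $[s,L_2]$.

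Next I would encode the Rankine--Hugoniot conditions. Setting $f(u):=m_0 u+(m_0/u)^\gamma$, the first two R--H relations collapse to $f(u_b^-(s))=f(u_b^+(s))$. The identity $f'(u)=m_0(u^2-c^2)/u^2$ shows $f$ is strictly monotone on each side of the sonic state, so the subsonic root $u_b^+(s)$ is uniquely determined and the entropy inequality $P_b^+(s)>P_b^-(s)$ follows automatically from the shape of $f$. Integrating the subsonic ODE on $(s,L_2]$ then defines a shooting function $\mathcal{P}(s):=P_b^+(L_2)$, and the lemma reduces to showing that $\mathcal{P}$ is a strictly decreasing homeomorphism from $[L_1,L_2]$ onto $[P_1,P_2]$.

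The main obstacle is the strict monotonicity $\mathcal{P}'(s)<0$. I would exploit the Bernoulli function $B:=u_b^2/2+\frac{\gamma}{\gamma-1}\rho_b^{\gamma-1}$, which satisfies $B'=g$ in each smooth region; this yields
\[
B^+(L_2)=B(L_1)+\int_{L_1}^{L_2}g(x_1)\,\de x_1+[B](s),\qquad [B](s):=B^+(s)-B^-(s),
\]
so the entire $s$-dependence of $\mathcal{P}(s)$ sits in the Bernoulli jump. Combining the algebraic identity $B'(u)=u f'(u)/m_0$ with implicit differentiation of $f(u_b^-)=f(u_b^+)$ gives
\[
\frac{\de[B]}{\de u_b^-}=\frac{f'(u_b^-)}{m_0}\bigl(u_b^+-u_b^-\bigr)<0,
\]
because $f'(u_b^-)>0$ and $u_b^+<u_b^-$; as $u_b^-(s)$ is strictly increasing in $s$, $[B](s)$ is strictly decreasing in $s$. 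Finally, $P_b^+(L_2)$ depends on $B^+(L_2)$ in a strictly increasing fashion on the subsonic branch (via the map $u\mapsto u^2/2+\frac{\gamma}{\gamma-1}(m_0/u)^{\gamma-1}$, which is decreasing for subsonic $u$), and this delivers $\mathcal{P}'(s)<0$.

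Setting $P_2:=\lim_{s\downarrow L_1}\mathcal{P}(s)$ and $P_1:=\lim_{s\uparrow L_2}\mathcal{P}(s)$, strict monotonicity yields $P_1<P_2$ and the intermediate value theorem produces a unique $L_b\in(L_1,L_2)$ solving $\mathcal{P}(L_b)=P_e$ for each $P_e\in(P_1,P_2)$; the claimed behavior $L_b\to L_1$ as $P_e\to P_2$ and $L_b\to L_2$ as $P_e\to P_1$ is an immediate consequence of the monotone continuity of $\mathcal{P}$.
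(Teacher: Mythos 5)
The paper does not reproduce a proof of this lemma; it cites \cite{WY22} and indicates only that the argument goes ``by employing the monotonicity relation between the shock position and the end pressure.'' Your shooting argument is exactly that monotonicity argument and it is correct: the reduction to the scalar ODE for $u_b$, the observation that $g>0$ pushes both branches away from the sonic line (guaranteeing global existence of the supersonic and subsonic profiles), the encoding of the first two Rankine--Hugoniot relations via $f(u)=m_0u+(m_0/u)^{\gamma}$, and above all the identity $\frac{\de [B]}{\de u_b^-}=\frac{f'(u_b^-)}{m_0}(u_b^+-u_b^-)<0$ for the Bernoulli jump are all right, and together they give $\mathcal{P}'(s)<0$, from which existence, uniqueness, strict monotonicity of $L_b$ in $P_e$, and the boundary behaviour all follow. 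It is worth noting that your computation makes the stabilizing role of the external force transparent: if $g\equiv 0$ then $u_b^-(\cdot)$ is constant, $[B](s)$ is independent of $s$, and $\mathcal{P}$ degenerates to a constant, recovering the well-known non-uniqueness of the shock location in a flat nozzle without forcing. The only minor point one could tighten is the remark that continuity of $\mathcal{P}$ on $[L_1,L_2]$ (needed to identify $P_1,P_2$ as the endpoint values and apply the intermediate value theorem) follows from continuous dependence of the subsonic ODE on the initial position and data; this is routine and does not affect the argument.
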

%\begin{figure}
 % \centering
  % Requires \usepackage{graphicx}
 % \includegraphics[width=10cm,height=5cm]{sub-1}
 % \caption{The flat and perturbed axisymmetric cylinder}
 % \end{figure}
\par The one-dimensional transonic shock solution $ \mathbf{\Psi}_b$ with a shock occurring at $ x_1=L_b $  will be called the background solution in this paper.  Clearly, one can extend the supersonic and subsonic parts
of $\mathbf{\Psi}_b $ in a natural way, respectively. For convenience, we still call the
extended subsonic and supersonic solutions $  \mathbf{\Psi}_b^+$
and $  \mathbf{\Psi}_b^- $. This paper is going to establish the structural
stability of this transonic shock solution under axisymmetric
perturbations of the incoming supersonic flows,   the exit pressure, the external force and the nozzle wall.
\par
 Introduce the cylindrical coordinates $(x, r, \theta)$:
\begin{equation*}
x=x_1,\ r=\sqrt {x_2^2 + x_3^2},\ \theta  = \arctan \frac{{{x_3}}}{{{x_2}}}.
\end{equation*}
Any function $h({\bf x})$ can be represented as $h({\bf x})=h(x,r,\theta)$, and a vector-valued function ${\bf H}({\bf x})$ can be represented as ${\bf H}({\bf x})=H_x(x,r,\theta){\bf e}_x+ H_r(x,r,\theta){\bf e}_r+ H_{\theta}(x,r,\theta){\bf e}_{\theta}$,
where
\begin{equation*}
{\bf e}_x=(1,0,0)^T,\quad {\bf e}_r=(0,\cos\theta, \sin\theta)^T,\quad {\bf e}_{\theta}=(0,-\sin\theta,\cos\theta)^T.
\end{equation*}
We say that a function $h({\bf x})$ is  axisymmetric if its value is independent of $\theta$ and that a vector-valued function ${\bf H }= (H_x, H_r, H_{\theta})$ is axisymmetric if each of functions $H_x({\bf x}), H_{r}({\bf x})$ and $H_{\theta}({\bf x})$ is axisymmetric.
\par Assume that the density, the pressure and the velocity are of the form
\begin{equation*}
\begin{aligned}
\rho({\textbf x})=\rho(x,r),\quad P({\textbf x})=P(x,r), \quad
{\bf u}({\textbf x})= u_x(x,r) {\bf e}_{x}+ u_r(x,r) {\bf e}_{r}+ u_{\theta}(x,r) {\bf e}_{\theta}.
\end{aligned}
\end{equation*}
Then \eqref{1-1} can be simplified as
 \begin{equation}\label{1-4}
\begin{cases}
\begin{aligned}
&\p_{x}(r\rho u_x)+\p_{r}(r\rho u_r)=0,\\
&\rho(u_x\p_{x}+u_r\p_{r})u_x+\p_{x}P=\rho\p_{x}\Phi,\\
&\rho(u_x\p_{x}+u_r\p_{r})u_r-\frac{\rho u_{\theta}^2}{r}+\p_{r}P=\rho\p_{r}\Phi,\\
&\rho(u_x\p_{x}+u_r\p_{r})(ru_{\theta})=0.
\end{aligned}
\end{cases}
\end{equation}
We  perturb the cylinder (Fig 2) as
\begin{equation*}
\mi:=\{(x,r)\in \mathbb{R}^2: L_1<x<L_2,\ 0\leq r<1+\sigma f(x)\},
\end{equation*}
where $ \sigma $ is sufficiently small and $ f\in C^{2,\alpha}([L_1,L_2])$ satisfies
\begin{equation}\label{1-5}
 f(L_1)=f^{\prime}(L_1)=0 .
 \end{equation}
 
 \par  Let the potential force $ \Phi $ and the  incoming supersonic flow at the inlet $ x=L_1 $ be prescribed as
\begin{equation}\label{1-6}
\begin{cases}
\Phi(x,r)= \Phi_b(x)+\sigma \Phi_e(x,r),\\
\mathbf{\Psi}^-(L_1,r)=\mathbf{\Psi}_b^-(L_1)
+\sigma(u_{en}^-, v_{en}^-,w_{en}^-, P_{en}^-)(r).
\end{cases}
\end{equation}
Here $ \Phi_b^{\prime}=g $, $ \Phi_e \in C^{2,\alpha}(\overline{ \mn}) $ and
$(u_{en}^-, v_{en}^-, w_{en}^-, P_{en}^-)\in (C^{2,\alpha}[0,1])^4 $.  On the nozzle wall, the flow satisfies the slip condition $  \textbf{u}\cdot \textbf{n}=0 $, where $ \textbf{n} $ is the outer normal of the nozzle wall. Using cylindrical coordinates, the slip boundary condition  can be rewritten as
\begin{equation}\label{1-7}
u_r=\sigma f^{\prime}(x)u_x, \ \ {\rm{on}}\ \ \Gamma=\{(x,r):r= 1+\sigma f(x),\ L_1\leq x \leq L_2\}.
\end{equation}
On the exit of the nozzle, the end pressure is prescribed by
\begin{equation}\label{1-8}
P(L_2,r)=P_e+\sigma P_{ex}(r),
\end{equation}
where  $ P_{ex}(r)\in C^{1,\alpha}([0,1+\sigma f(L_2)]) $. 
\begin{figure}
  \centering
  % Requires \usepackage{graphicx}
  \includegraphics[width=9cm,height=4cm]{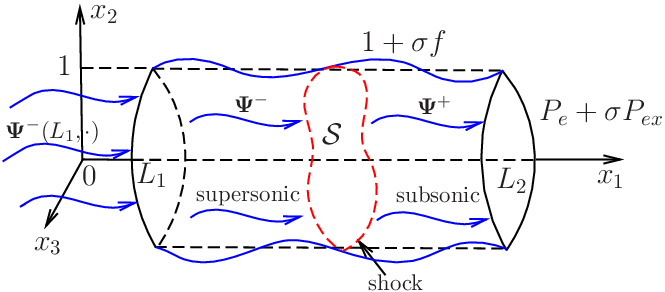}
  \caption{ The stability problem }
\end{figure}%%Moreover, since the flow is smooth near the axis $ r=0 $, thus we have the following compatibility conditions:
%\begin{equation}\label{1-9}
%u_r(x,0)=u_{\theta}(x,0)=0,\quad \forall x\in[L_1,L_2].
%\end{equation}
  \par In this paper, we want to look for a piecewise smooth solution $ \mathbf{\Psi} $, which jumps only at a shock front $ \ms=\{(x,r):x=\xi(r), r\in[0,r_\ast]\} $. Here $r_\ast $ stands for  the intersection circle of the shock surface with the nozzle wall. More precisely, $ \mathbf{\Psi} $ has the following form
\begin{equation}\label{1-10}
\mathbf{\Psi}=
\begin{cases}
\mathbf{\Psi}^-:=(u_x^-,u_r^-,u_{\theta}^-,P^-)(x,r), \ \ {\rm{in}}\ \mn_-=\{L_1<x<\xi(r), \ 0\leq r< 1+\sigma f(x)\},\\
\mathbf{\Psi}^+:=(u_x^+,u_r^+,u_{\theta}^+,P^+)(x,r), \ \ {\rm{in}}\ \mn_+=\{\xi(r)<x<L_2,\ 0\leq r< 1+\sigma f(x)\},\\
\end{cases}
\end{equation}
  and satisfies the following  Rankine-Hugoniot conditions on  the shock surface $ \ms $:
\begin{equation}\label{1-11}
\begin{cases}
[\rho u_x]-\xi^{\prime}(r)[\rho u_r]=0,\\
[\rho u_x^2+P]-\xi^{\prime}(r)[\rho u_xu_r]=0,\\
[\rho u_xu_r]-\xi^{\prime}(r)[\rho u_r^2+P]=0,\\
[\rho u_xu_{\theta}]-\xi^{\prime}(r)[\rho u_ru_{\theta}]=0.\\
\end{cases}
\end{equation}
\par The existence and uniqueness of the supersonic flow to \eqref{1-4} follows from the classical theory  to the boundary value problem for quasi-linear hyperbolic systems (See \cite{LY85}).
\begin{lemma}
Assume that the potential
 force and  the  incoming supersonic data given in \eqref{1-6} satisfying the following compatibility conditions:
\begin{equation}\label{1-13}
\begin{cases}
\begin{aligned}
&\p_r\Phi_e(x,0)=0,\\
&v_{en}^-(0)=w_{en}^-(0)=(v_{en}^-)''(0)=(w_{en}^-)^\prime(0)=
(P_{en}^-)^\prime(0)=0,\\
&v_{en}^-(1)=0,\ (P_{en}^-)^\prime(1)=\rho_{en}^-(1)((w_{en}^-)^2(1)+\sigma\p_r\Phi_e(L_1,1)).\\
\end{aligned}
\end{cases}
\end{equation}
Then there exists a constant $\sigma_0>0$ depending only on the background solution and the boundary data, such that for any $0<\sigma<\sigma_0$, there exists a unique  axisymmetric solution $ \mathbf{\Psi}^-=(u_x^-,u_r^-,u_{\theta}^-,P^-)
\in C^{2,\alpha}(\overline{\mi}) $ to \eqref{1-4}  with \eqref{1-6}-\eqref{1-7}, which satisfies
\begin{equation}\label{1-14}
  \|(u_x^-,u_r^-,u_{\theta}^-,P^-)-(u_b^-,0,0,P_b^{-})\|_{C^{2,\alpha}(\overline{\mi})}\leq C_0 \sigma,
  \end{equation}
  and
  \begin{equation}\label{1-15}
 (u_r^-, {\p_r^2} u_{r}^-)(x,0)=(u_{\theta}^-,\p_ru_{\theta}^-)(x,0)=\p_r(u_x^-,P^-)(x,0)
=0, \quad \forall x\in[L_1,L_2].
  \end{equation}
  \end{lemma}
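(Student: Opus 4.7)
The plan is to recast the supersonic problem in the original 3D Cartesian form \eqref{1-1} (rather than the cylindrical form \eqref{1-4}), so as to sidestep the artificial $1/r$ singularity on the axis and then apply off-the-shelf local existence theory of Li--Yu \cite{LY85} for quasi-linear hyperbolic initial-boundary value problems. Taking $(u_1,u_2,u_3,P)$ as unknowns (with $\rho=P^{1/\gamma}$), \eqref{1-1} becomes a symmetric quasi-linear hyperbolic system in which $x_1$ plays the role of ``time''. Since $\mathbf{\Psi}_b^-$ is strictly supersonic on $[L_1,L_2]$, the coefficient matrix multiplying $\partial_{x_1}$ is invertible and uniform hyperbolicity is preserved under any $C^{2,\alpha}$-small perturbation. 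The lateral boundary is the rotationally symmetric wall $\{x_2^2+x_3^2=(1+\sigma f(x_1))^2\}$ on which the slip condition $\mathbf{u}\cdot\mathbf{n}=0$ is imposed; the supersonic data \eqref{1-6} is prescribed on $\{x_1=L_1\}$.

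The first substantive step is to check that the compatibility conditions in \eqref{1-13} are exactly those needed to apply the Li--Yu theory. The axis conditions $v_{en}^-(0)=w_{en}^-(0)=(v_{en}^-)''(0)=(w_{en}^-)'(0)=(P_{en}^-)'(0)=0$ and $\partial_r\Phi_e(x,0)=0$ ensure that the axisymmetric inlet data lift to $C^{2,\alpha}$ functions in Cartesian coordinates through the identification $u_r\mathbf{e}_r+u_\theta\mathbf{e}_\theta\leftrightarrow (u_2,u_3)$. The last line of \eqref{1-13} is the first-order compatibility at the corner $\{x_1=L_1\}\cap\Gamma$: using $f(L_1)=f'(L_1)=0$ from \eqref{1-5} one has $u_r(L_1,1)=0$, and substituting this into the $r$-momentum equation at $(L_1,1)$ produces exactly the stated identity.

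Second, for global extension to all of $\overline{\mi}$ and the estimate \eqref{1-14}, I would run a standard contraction/iteration in the ball $\{\|\mathbf{\Psi}^--\mathbf{\Psi}_b^-\|_{C^{2,\alpha}}\le C_0\sigma\}$. Linearizing \eqref{1-1} around any near-background iterate keeps all characteristic speeds uniformly bounded away from the critical ones, so a standard $C^{2,\alpha}$ energy/a priori estimate on the finite strip $L_1<x_1<L_2$ is available. Because the forcing from the perturbations of the inlet data, the wall, and the external force is $O(\sigma)$, one obtains a contraction on a $C_0\sigma$-ball, yielding a unique fixed point that satisfies \eqref{1-14}.

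Finally, axisymmetry of the solution follows from the uniqueness part of Li--Yu: the $SO(2)$-action rotating the $(x_2,x_3)$-plane preserves the domain, the system and the data, so the unique $C^{2,\alpha}$ solution is rotationally invariant. The axis conditions \eqref{1-15} then drop out of Taylor expansion of an axisymmetric $C^{2,\alpha}$ vector field in Cartesian coordinates: scalar components ($u_x^-$, $P^-$) have vanishing $\partial_r$ on the axis, while $u_r^-$ and $u_\theta^-$ vanish to odd order in $r$ so that $u_r^-=\partial_r^2 u_r^-=0$ at $r=0$; the extra relation $\partial_r u_\theta^-(x,0)=0$ is propagated from $(w_{en}^-)'(0)=0$ by the transport equation $(u_x\partial_x+u_r\partial_r)(ru_\theta)=0$, which forces $u_\theta^-=O(r^2)$ along streamlines. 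The main obstacle is really just the careful bookkeeping of the axis and corner compatibilities, but once \eqref{1-5} and \eqref{1-13} are in hand, the rest is a routine application of existing quasi-linear hyperbolic IBVP theory.
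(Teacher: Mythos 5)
The paper does not actually prove this lemma; it simply cites the classical theory of boundary value problems for quasi-linear hyperbolic systems (Li--Yu \cite{LY85}) and states the result. Your proposal fills in the details the paper omits, and it does so in a way that is consistent with the paper's citation: you treat the supersonic region as a quasi-linear hyperbolic IBVP with $x_1$ playing the role of time, and appeal to the same classical theory. The main device you add---lifting to 3D Cartesian coordinates to remove the artificial $1/r$ singularity, then recovering axisymmetry from the uniqueness part combined with the $SO(2)$-invariance of the domain, system and data---is a standard and sound way to handle the axis, and your derivation of the compatibility conditions \eqref{1-15} from even/odd parity in $r$ of the Cartesian lift, together with the transport equation $(u_x\partial_x+u_r\partial_r)(ru_\theta)=0$ propagating $\partial_r u_\theta^-(x,0)=0$ from $(w_{en}^-)'(0)=0$, is essentially correct. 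Two small imprecisions worth flagging: (i) by parity $u_\theta^-$ is odd in $r$, so once $\partial_r u_\theta^-(x,0)=0$ one actually has $u_\theta^-=O(r^3)$ rather than $O(r^2)$, though this does not affect the conclusion; (ii) the first-order corner compatibility at $(L_1,1)$ also involves differentiating the slip condition $u_r=\sigma f'(x)u_x$ along the wall, which introduces a term with $f''(L_1)$ in addition to the $r$-momentum balance you describe; the stated identity in \eqref{1-13} absorbs that structure, and your phrase ``produces exactly the stated identity'' glosses over that bookkeeping. Neither point undermines the argument; the proposal is correct and aligned with the paper's cited approach.
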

  Therefore, the problem is reduced to solve a free boundary value problem for the steady axisymmetric  Euler system with an external force in which the shock front and the downstream subsonic flows are unknown. To state the main result,   we first introduce some weighted H\"{o}lder spaces and their norms.  For  any bounded domain  $ \md\in\mathbb{R}^3 $,  let $ \mh  $ be a closed portion of $ \md $.  For $ \mathbf{x},\tilde{\mathbf{x}}\in \md $, define
  \begin{equation*}
 \delta_{\mathbf{x}}:=\rm{dist}(\mathbf{x},\mh)\quad {\rm{and}} \quad
  \delta_{\textbf{x},\tilde{\textbf{x}}}:
  =\min(\delta_{\mathbf{x}},\delta_{\tilde{\mathbf{x}}}).
  \end{equation*}
  For any positive integer $ m $, $ \alpha\in(0,1) $ and $ \kappa\in \mathbb{R} $, we define
  \begin{equation*}
  \begin{aligned}
  {\|u\|}_{m,0;\md}^{(\kappa;\mh)}\ &:=\sum_{0\leq|\beta|=m}\sup_{\textbf{x}\in \md} \delta_{\mathbf{x}}^{\max(|\beta|+\kappa,0)}|D^{\beta}u(\textbf{x})|,;\\
  [u]_{m,\alpha;\md}^{(\kappa;\mh)}\ &:=\sum_{|\beta|=m}\sup_{ {\textbf{x}},\tilde {\textbf{x}}\in \md,{\textbf{x}}\neq \tilde {\textbf{x}}}\delta_{\mathbf{x},\tilde{\mathbf{x}}}^{\max(m+\alpha+\kappa,0)}
  \frac{|D^{\beta}u(\textbf{x})-D^{\beta}u(\tilde{\textbf{x}})|}{|\textbf{x}-\tilde{\textbf{x}} |^{\alpha}};\\
  \end{aligned}
  \end{equation*}
  \begin{equation*}
  \begin{aligned}
  \|u\|_{m,\alpha;\md}^{(\kappa;\mh)}\ &:= {\|u\|}_{m,0;\md}^{(\kappa;\mh)}
  +[u]_{m,\alpha;\md}^{\kappa;\mh}.\\
  \end{aligned}
  \end{equation*}
  where $D^{\beta}$ denotes $\p_{x_1}^{\beta_1}\p_{x_2}^{\beta_2}\p_{x_3}^{\beta_3} $ for a multi-index $\beta= (\beta_1,\beta_2,\beta_3)$ with $\beta_j\in \mathbb{Z}_+$ and $|\beta|=\sum_{j=1}^3\beta_j$.
  $ C_{m,\alpha}^{(\kappa;\mh)}(\md) $ denotes the completion of the set of all smooth functions whose $\|\cdot\|_{m,\alpha;\md}^{(\kappa,\mh)}$ norms are finite.
  Furthermore, for a vector function $ {\bf u}=(u_1,u_2,\cdots,u_n) $, define
  \begin{equation*}
   \|{\bf u}\|_{m,\alpha;\md}^{(\kappa,\mh)}:=\sum_{i=1}^{n}\| u_i\|_{m,\alpha;\md}^{(\kappa,\mh)}.
\end{equation*}
  \par The main result in this paper
is stated as follows.
  % Let $ \ms=\{(x,r):x=\xi(r),r\in[0,r_\ast]\} $ and $(\xi(r_\ast),r_\ast) $ stand for the shock front and the intersection circle of the shock surface with the nozzle wall, respectively. Furthermore, $ \mi $ is separated by the shock front $ \ms $ into two part: the supersonic region and subsonic region, denoted by
  %\begin{equation*}
%{\mi}_-=\{(x,r):L_1<x\leq\xi(r),\ 0\leq r < 1+\sigma f(x)\},
%\end{equation*}
%\begin{equation*}
%{\mi}_+=\{(x,r):\xi(r)\leq x<L_2,\ 0\leq r < 1+\sigma f(x)\}.
%\end{equation*}
\begin{theorem}
  Assume that the compatibility conditions \eqref{1-5} and \eqref{1-13} hold. There exist suitable positive constants $ \sigma_0 $ and $ C_\ast$ depending only on the background solution $ \mathbf{\Psi}_b $ defined in \eqref{1-3} and the boundary data $ \mathbf{\Psi}^-(L_1,\cdot) $, $ f $, $ P_{ex} $ ,$ \Phi_e $ such that if $ 0< \sigma \leq \sigma_0 $, the problem \eqref{1-4} with \eqref{1-6}, \eqref{1-7},   \eqref{1-8} and \eqref{1-11}  has a unique axisymmetric solution $  \mathbf{\Psi}^+=(u_x^+,u_r^+,u_{\theta}^+,P^+) $ with the shock front $ \ms $ satisfying the following properties.
\begin{item}[\rm{(1)}]
The function $ \xi(r) \in C_{3,\alpha}^{(-1-\alpha;\{r_\ast\})}([0,r_\ast)) $ satisfies
\begin{equation}\label{1-17}
\|\xi(r)-L_b\|_{3,\alpha;[0,r_\ast)}^{(-1-\alpha;\{r_\ast\})} \leq C_\ast\sigma,
\end{equation}
and
\begin{equation}\label{1-18}
\xi^\prime(0)=\xi^{(3)}(0)=0.
\end{equation}
 \end{item}
\begin{item}[\rm{(2)}]
The solution $ \mathbf{\Psi}^+=(u_x^+,u_r^+,u_{\theta}^+,P^+)\in C_{2,\alpha}^{(-\alpha;\Gamma_{p,s})}(\mi_+) $ satisfies   the entropy condition
 \begin{equation}\label{1-12}
P^{+}(\xi(r),r)>P^{-}(\xi(r),r), \ \forall r\in [0,r_\ast],
\end{equation}
and the estimate
\begin{equation}\label{1-19}
\|\mathbf{\Psi}^+-\mathbf{\Psi}_b^+\|_{2,\alpha;\mi_+}^{(-\alpha;\Gamma_{p,s})}\leq C_\ast\sigma
\end{equation}
 with the compatibility conditions
 \begin{equation}\label{1-20}
(u_r^+, {\p_r^2} u_{r}^+)(x,0)=(u_{\theta}^+,\p_ru_{\theta}^+)(x,0)=\p_r(u_x^+,P^+)(x,0)
=0, \quad \forall x\in[\xi(r),L_2].
  \end{equation}
Here
\begin{equation*}
\Gamma_{p,s}=\{(x,r):\xi(r) \leq x\leq L_2, r=1+\sigma f(x)\}.
\end{equation*}
\end{item}
\end{theorem}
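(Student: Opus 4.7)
The plan is to reduce Theorem 1.3 to a nonlinear fixed-point problem in which both the downstream flow $\mathbf{\Psi}^+$ and the shock front $x=\xi(r)$ are produced simultaneously, and where the shock position is captured by a single scalar algebraic equation. Since the supersonic state $\mathbf{\Psi}^-$ upstream is already furnished by Lemma 1.2, the free boundary problem is: find $(\mathbf{\Psi}^+,\xi)$ satisfying \eqref{1-4} in $\mn_+$, the slip condition \eqref{1-7} on $\Gamma_{p,s}$, the exit condition \eqref{1-8} at $x=L_2$, the Rankine--Hugoniot jump conditions \eqref{1-11} on $\ms$, and the entropy condition \eqref{1-12}. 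The whole strategy is to linearize around the background $(\mathbf{\Psi}_b^+,L_b)$ and run a quasi-linear iteration in weighted H\"older spaces, using (i) a modified Lagrangian transformation to straighten streamlines and tame the apparent singularity at the symmetry axis, (ii) the deformation--curl decomposition to split the mixed-type Euler system into transport equations along streamlines plus an elliptic system for the ``velocity potential'' part, and (iii) a reformulation of the R--H conditions that isolates the shock location from the transmitted flow quantities.

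First I would introduce the modified Lagrangian coordinates $(y,\tau)$ adapted to $\mn_+$, where $y=x$ and $\tau$ is defined via the (axisymmetric) mass-flux stream function so that $\tau=\text{const}$ along streamlines and $\tau=0$ corresponds to the axis while $\tau=1$ corresponds to the nozzle wall. In these coordinates the domain becomes the fixed rectangle $(L_b,L_2)\times[0,1]$ (after a further translation taking the unknown shock $x=\xi(r)$ to $y=L_b$), the slip condition \eqref{1-7} and the axis condition \eqref{1-20} become flat boundary conditions $\tau=1$ and $\tau=0$, and the factor $1/r$ that produces the axis singularity in \eqref{1-4} is absorbed into the Jacobian, becoming a regular coefficient once the compatibility conditions \eqref{1-15} and \eqref{1-20} are imposed. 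Next I would apply the deformation--curl decomposition: writing the downstream velocity as $\mathbf{u}^+=\mathbf{u}_b^++\sigma\mathbf{v}$ and using the Bernoulli law, the entropy equation and the angular-momentum equation of \eqref{1-4}, one obtains three transport equations along streamlines (for a modified Bernoulli-type quantity incorporating the external force $\Phi$, for the entropy $S=P/\rho^\gamma$, and for $r u_\theta$) together with a first-order elliptic system for $(u_x,u_r)$ coming from the divergence-free constraint on the rescaled mass flux and the curl relation provided by the Crocco-type identity. In Lagrangian coordinates the three transport equations degenerate into ODEs in $y$ with data prescribed on $\{y=L_b\}$, so they can be solved explicitly in terms of the upstream trace and of the still-unknown shock profile $\xi$.

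The core analytic step is to identify from the four R--H conditions \eqref{1-11} the right decomposition: three of them, combined with the upstream data, prescribe the downstream Bernoulli function, entropy and angular momentum on $\ms$ (i.e.\ the data for the transport equations), while the remaining scalar condition is reformulated as an algebraic equation of the form $\mathcal{F}(\xi,\mathbf{\Psi}^+|_{\ms})=0$ in which, crucially, the partial derivative with respect to $\xi$ at the background state is nonzero; this is the content of the monotonicity already visible in Lemma 1.1 and is what makes the shock position uniquely determined. Using this algebraic relation, $\xi$ is expressed as a smooth functional of the restriction of the downstream flow to the shock, thereby closing a Picard-type iteration: given $(\tilde{\mathbf{\Psi}}^+,\tilde\xi)$ one updates the three conserved invariants by transport, solves the elliptic system for the updated $(u_x^+,u_r^+)$ with boundary data on $\ms=\{y=L_b\}$ taken from R--H, on $\tau=1$ from \eqref{1-7}, on $\tau=0$ from axis symmetry, and the pressure boundary condition \eqref{1-8} at $y=L_2$, and finally recovers the new $\xi$ from the algebraic equation. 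The weighted spaces $C_{m,\alpha}^{(\kappa;\mh)}$ with weight set to $\Gamma_{p,s}$ absorb the corner singularity at the intersection of $\ms$ with the nozzle wall, and the compatibility conditions \eqref{1-5}, \eqref{1-13} together with the axis compatibility in \eqref{1-15} propagate through the iteration to yield \eqref{1-18} and \eqref{1-20}.

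The main obstacle, and the part requiring the most care, is the elliptic step together with the corner regularity. The elliptic system for $(u_x^+,u_r^+)$ is of second order after taking curl/div, with mixed boundary conditions (Dirichlet-type for normal velocity on the shock via R--H, Dirichlet for pressure at the exit, slip on the wall, regularity on the axis), and its coefficients degenerate mildly near $\tau=0$; I would handle this by symmetric reflection across the axis and by a careful H\"older estimate in the weighted norm adapted to $\Gamma_{p,s}$ to obtain $C^{2,\alpha}$ bounds up to but not including the corner. Uniqueness and contraction then follow by a linearization argument exploiting the sign condition behind the monotonicity of the algebraic equation for $\xi$, which provides the missing compactness needed to close the fixed-point scheme. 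Once existence and estimates \eqref{1-17}--\eqref{1-19} are established for $\sigma\le\sigma_0$, the entropy condition \eqref{1-12} follows by perturbation from the strict inequality $[P_b](L_b)>0$ in Lemma 1.1.
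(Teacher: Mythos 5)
Your high-level strategy matches the paper's: the modified Lagrangian transformation to straighten streamlines and tame the axis singularity, the deformation--curl decomposition into transport plus elliptic modes, and recasting the Rankine--Hugoniot relations so that the shock location is captured by a scalar algebraic condition whose non-degeneracy comes from $g>0$ (this is exactly what makes the coefficient $b_2$ in \eqref{2-22} nonzero). However, there are two issues worth flagging.

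A minor one first: you list the entropy $S=P/\rho^\gamma$ as a third transported invariant. The paper works with isentropic polytropic gas with a fixed equation of state $P=\rho^\gamma$ on \emph{both} sides of the shock, so there is no entropy variable and no entropy transport equation. In the subsonic region only two quantities are carried along streamlines: the Bernoulli quantity $B$ (adjusted for $\Phi$) and the angular momentum $ru_\theta$. The extra ``entropy'' equation you would write down is vacuous here.

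The substantive gap is in the iteration structure. You propose a plain Picard scheme: freeze the previous shock $\tilde\xi$, feed $\tilde\xi$ into the transport data on $\ms$, solve the elliptic system, then recover the new $\xi$ from the algebraic equation. The trouble is that the coupling between the shock displacement, the Bernoulli trace on the shock, and $u_x^+|_{\ms}$ is \emph{at linear order}: from \eqref{2-22} and \eqref{2-23} one has $v_1|_{\ms}\approx b_2 v_5$ and the Bernoulli trace $\approx b_3 v_5$, with $b_2, b_3 = O(1)$, not $O(\sigma)$. If you evaluate the transport data at the \emph{old} shock, the leading linear coupling is treated explicitly and the resulting Lipschitz constant of the update map is proportional to $|b_3|/b_2$ times the norm of the elliptic solution operator, which is $O(1)$ and is not guaranteed to be below $1$. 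The paper sidesteps this by keeping the coupling implicit: \eqref{3-7} expresses the \emph{new} Bernoulli quantity in terms of the \emph{new} trace $w_1(L_b,\cdot)$, so the elliptic system \eqref{3-8} inherits nonlocal terms $\tfrac{b_3}{b_2}w_1(L_b,z_2)$ and $\tfrac{b_3}{b_2}\p_{z_2}w_1(L_b,z_2)$. The crucial technical step is then the introduction of a potential function $\phi$ and a free constant $\Lambda$, reducing \eqref{3-8} to the second-order elliptic equation with a nonlocal zero-order term \eqref{3-13}, whose unique solvability is established via the tailored bilinear form in Lemma~3.1, the Fredholm alternative, and the spectral argument of Proposition~3.2. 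With the nonlocal part handled implicitly at the linear level, the contraction constant is $O(\sigma)$, coming only from the frozen quadratic remainders \eqref{3-9}. Your outline does not engage with this nonlocal elliptic problem, and without it the fixed-point step as you describe it would not obviously close.
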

\begin{remark}
Compared with the two-dimensional case in \cite{WY22}, one of the major difficulties for
axisymmetric flows is the possible singularity near the symmetric axis. Inspired by \cite{WXX21},  the singular term $ r $ in the density equation is of order $ O(r) $ near the axis $ r=0 $, hence we can  find a simple  modified
Lagrangian transformation  such that it is invertible near the axis and also straightens
the streamline.
\end{remark}
\begin{remark}
 For generic perturbations of the nozzle wall,  the corner singularity will be transported along the trajectory and the velocity field will develop singularity and can not be Lipschitz near the cylinder wall in general. Thus one can only obtain  the optimal $C^{\alpha}(\overline{\mn_+})$ regularity  in Theorem 1.3. This fact
has been shown in Lemma 3.3 and Remark 3.2 of  \cite{XYY09}.  If the nozzle wall is not perturbed,  we can  show the higher order regularity for both the flows and the shock surface under the higher compatibility conditions of the exit pressure.
\end{remark}
\begin{remark}
 The previous work \cite{FG21,LYX10,WXX21} reduced the steady axisymmetric Euler system in the subsonic region  into a  elliptic system of the flow angle and
pressure. One of main ingredients of our analysis here is quite
different from those in \cite{FG21,LYX10,WXX21},
   we   decouple the hyperbolic and elliptic modes in terms of the deformation and vorticity developed in \cite{WX19,WS19},  and reduce the steady axisymmetric Euler system with an external force to two transport equations for the Bernoulli's quantity and the swirl velocity  and a deformation-curl system for the velocity field. Another one is the reformulation of the Rankine-Hugoniot conditions,
which determines the shock front by an algebraic equation and also yields suitable boundary conditions for the hyperbolic
and elliptic modes.
\end{remark}
\par The rest of this article is organized as follows.   In Section 2, we  introduce the modified Lagrangian transformation  and decompose  the hyperbolic and elliptic modes for the steady axisymmetric   Euler system with an external force in the subsonic region in terms of the deformation and curl, and the corresponding reformulation of the Rankine-Hugoniot  conditions.  We also introduce a coordinate transformation such that  the free boundary becomes fixed.
In Section 3,   we design an iteration scheme to prove  Theorem 1.3.
\section{The reformulation of the transonic shock problem}\noindent
\par In this  section, we first introduce the modified Lagrangian transformation to straighten the streamline, then  the deformation-curl reformulation  developed in \cite{WX19,WS19} is employed to  rewrite the steady axisymmetric Euler system with an external force. Finally, we reformulate the Rankine-Hugoniot conditions and boundary conditions  and introduce another coordinates transformation   to reduce the transonic shock problem into a fixed boundary value problem.
\subsection{The modified Lagrangian transformation}\noindent
\par  As we mentioned in Remark 1.5,  one
can only expect the $C^{\alpha} $ boundary regularity for the solution in the subsonic region.  In order to avoid the difficulty in uniquely determining the trajectory, we introduce a Lagrangian transformation to straighten the streamline. However, in the three-dimensional axisymmetric setting,  there is a singular term $ r $ in the density equation.  Inspired by \cite{WXX21}, we can  find a simple  modified
Lagrangian transformation   to overcome this difficulty and apply this modified Lagrangian transformation to rewrite \eqref{1-4} and \eqref{1-11}.
\par Define $  (\tilde y_1,\tilde y_2)=(x,\tilde y_2(x,r)) $ such that
\begin{equation}\label{2-1}
\begin{cases}
\frac{\p \tilde y_2}{\p x}=-r\rho^-u_r^-, \
\frac{\p \tilde y_2}{\p r}=r\rho^-u_x^-, \quad {\rm{if}}\
(x,r)\in \overline{\mi_-},\\
\frac{\p \tilde y_2}{\p x}=-r\rho^+u_r^+, \
\frac{\p \tilde y_2}{\p r}=r\rho^+u_x^+, \quad {\rm{if}}\
(x,r)\in \overline{\mi_+},\\
\tilde y_2(L_1,0)=0,\quad \tilde y_2(L_2,0)=0.\\
\end{cases}
\end{equation}
On the axis $ r=0 $ and the nozzle wall $ \Gamma $, one  derives that
\begin{equation*}
\frac{\de} {\de x}\tilde y_2(x,0)=0 \quad {\rm{and}}\quad
\frac{\de} {\de x}\tilde y_2(x,1+\sigma f(x))=0.
\end{equation*}
Thus for  any $  x\in[L_1,L_2] $, we can assume $ \tilde y_2(x,0)=0 $. Then one has
\begin{equation*}
\begin{cases}
\tilde y_2(x,1+\sigma f(x))=\me^2, \quad  \forall x\in[L_1,L_\ast],\\
\tilde y_2(x,1+\sigma f(x))=\me_1^2, \quad \forall x\in[L_\ast,L_2],
\end{cases}
\end{equation*}
where $ \me $ and $ \me_1 $ are two constants to be determined, and $ ( L_\ast, 1+\sigma f(L_\ast))$ is the intersecting point of the shock front $ \ms $ with the nozzle wall $ \Gamma $. Next, we need to  verify  that $ \tilde y_2(x,r) $ is well-defined and belongs to Lip($ \overline\mi $). Indeed,  using the first equation in \eqref{1-4}, one obtains
\begin{equation*}
\frac{\de \tilde y_2} {\de r}(\xi(r)+ 0,r)=\frac{\de \tilde y_2} {\de r}(\xi(r)- 0,r).
\end{equation*}
This yields $ \me_1=\me $, which can be computed as follows
\begin{equation*}
 \me^2=\int_{0}^{1}s\rho^- u_x^-(L_1,s)\de s>0.
 \end{equation*}
 \par Define the modified Lagrangian transformation   as
\begin{equation}\label{2-2}
\begin{cases}
y_1=x,\\
y_2=\tilde y_2^{\frac{1}{2}}(x,r).\\
\end{cases}
\end{equation}
If $ (\rho^\pm,u_x^\pm, u_r^\pm,u_{\theta}^\pm) $ are close to the background solution
$ (\rho_b^\pm,u_b^\pm,0,0) $, there exist two positive constants $ C_1 $ and $ C_2  $, depending on the background solution, such that
\begin{equation*}
C_1r^2\leq\tilde y_2(x,r)=\int_{0}^{r}s\rho^- u_x^-(L_1,s)\de s\leq C_2r^2.
\end{equation*}
 Then one gets $ \sqrt{C_1}r \leq y_2(x,r)\leq\sqrt{C_2}r $.   Therefore,  the Jacobian of the modified  Lagrangian transformation satisfies
 \begin{equation}\label{2-3}
\frac{\p(y_1,y_2)}{\p(x,r)}=\left|
\begin{matrix} 1& 0 \\ -\frac{r\rho u_r}{2y_2} & \frac{r\rho u_x}{2y_2} \end{matrix}\right|=\frac{r\rho u_x}{2y_2}\geq C_3>0.
\end{equation}
Hence the modified  Lagrangian transformation is invertible.
\par Under the  modified  Lagrangian transformation,   the shock front $ \ms $ and the flows before and behind $ \ms $ are denoted by $ y_1=\psi(y_2) $ and $ (u_x^\pm,u_r^\pm,u_{\theta}^\pm,P^\pm)(y_1,y_2) $ respectively.
Then the domains  $ \mi_- $ and $ \mi_+ $ are changed into
\begin{equation*}
\begin{cases}
\tilde \mi_-=\{(y_1,y_2):L_1<y_1<\psi(y_2),y_2\in[0,\me)\},\\
\tilde \mi_+=\{(y_1,y_2):\psi(y_2)<y_1<L_2,y_2\in[0,\me)\}.\\
\end{cases}
\end{equation*}
The nozzle wall $ \Gamma_{p,s} $ is straightened to be
 \begin{equation}\label{2-q}
  \Gamma_{p,y}=\{(y_1,y_2):\psi(\me) \leq y_1\leq L_2,\ y_2=\me\}.
\end{equation}
\par On the other hand, under the modified Lagrangian transformation, $ r $ as a function of $(y_1,y_2) $ becomes nonlinear and nonlocal. In fact, it follows from the inverse transformation that
\begin{equation*}
  \frac{\p r}{\p y_1}=\frac{u_r}{u_x},\quad  \frac{\p r}{\p y_2}=\frac{2y_2}{r \rho u_x}, \quad r(y_1,0)=0.
  \end{equation*}
  Then it holds that
\begin{equation}\label{2-4}
r(y_1,y_2)=2\left(\int_{0}^{y_2}\frac{s}{ \rho u_x(y_1,s)}\de s\right)^{\frac{1}{2}}.
\end{equation}
In particular, for the background solution $ (\rho_b^+,u_b^+,0,0) $, one has
  \begin{equation}\label{2-5}
r_b(y_2)=\kappa_b y_2,
\end{equation}
where $ \kappa_b=\left(\frac{2}{(\rho_b^+ u_b^+ )(y_1)}\right)^{\frac{1}{2}} $ is a positive  constant for any $ y_1\in[L_b,L_2]$.
\par For simplicity of the notations, we neglect the superscript"+"for the solution in the subsonic region. Under the transformation \eqref{2-2}, the  system \eqref{1-4} becomes
 \begin{equation}\label{2-6}
  \begin{cases}
  \begin{aligned}
  &\p_{y_1}\left(\frac{2y_2}{r\rho u_x}\right)-\p_{y_2}\left(\frac{u_r}{u_x}\right)=0,\\
  &\p_{y_1}\left(u_x+\frac{P}{\rho u_x}\right)-\frac{r}{2y_2}\p_{y_2}
  \left(\frac{Pu_r}{u_x}\right)-\frac{Pu_r}{r\rho u_x^2}=\p_{y_1}\Phi-\frac{r\rho u_r}{2y_2u_x}\p_{y_2}\Phi,\\
  &\p_{y_1}u_r+\frac{r}{2y_2}\p_{y_2}P-\frac{u_{\theta}^2}{ru_x}=\frac{r\rho }{2y_2}\p_{y_2}\Phi,\\
  &\p_{y_1}(ru_{\theta})=0.\\
  \end{aligned}
  \end{cases}
  \end{equation}
 The Rankine-Hugoniot conditions \eqref{1-15} across the shock front $ \ms $  become
\begin{equation}\label{2-7}
\begin{cases}
\begin{aligned}
&\frac{2y_2}{r}\left[\frac{1}{\rho u_x}\right]+\psi^{\prime}(y_2)
\left[\frac{ u_r}{ u_x}\right]=0,\\
&\left[u_x+\frac{ P}{ \rho u_x}\right]+\psi^{\prime}(y_2)\frac{r}{2y_2}\left[\frac{ Pu_r}{ u_x}\right]=0,\\
&[u_r]-\psi^{\prime}(y_2)\frac{r}{2y_2}[P]=0,\\
&[u_{\theta}]=0.\\
\end{aligned}
\end{cases}
\end{equation}

\subsection{The deformation-curl decomposition for the  steady   axisymmetric Euler system with an external force}\noindent
\par   Note that the steady   axisymmetric Euler system with an external force is hyperbolic-elliptic mixed in the subsonic region and the shock front is a free boundary which should be determined with the subsonic flow
simultaneously. We utilize  the deformation-curl decomposition  introduced  in \cite{WX19,WS19} to  decouple the hyperbolic and elliptic modes and find an elaborate decomposition of the Rankine-Hugoniot conditions which determines the shock front uniquely by an algebraic equation and yields suitable boundary conditions for the hyperbolic and elliptic modes, respectively.
\par Define the Bernoulli's function $ B $ by
\begin{equation}\label{2-8}
B=\frac{1}{2}{|\textbf{u}|^2} +\frac{\gamma P}{(\gamma  - 1)\rho}-\Phi.
\end{equation}
There holds
\begin{equation}\label{2-a}
\rho \mathbf{u}\cdot B=0.
\end{equation}
Using the  Bernoulli's quantity,  the density $ \rho $  can be represented as
\begin{equation}\label{2-9}
\rho=H(B,\Phi,|\textbf{u}|^2)=
\left(\frac{\gamma-1}{\gamma}(B+\Phi-\frac{1}{2}|\textbf{u}|^2)\right)
^{\frac{1}{\gamma-1}}.
\end{equation}
 Define the vorticity $ \omega=\curl \textbf{u}=\omega_x \mathbf{e}_x + \omega_r \mathbf{e}_r + \omega_{\theta} \mathbf{e}_{\theta}$, where
 \begin{equation*}
 \omega_x= \frac{1}{r}\p_r(r u_{\theta}),\ \omega_r=-\p_x u_{\theta},\ \omega_{\theta}= \p_x u_r-\p_r u_x.
 \end{equation*}
   One can follow from the third equation in \eqref{1-4} to  derive that
\begin{equation}\label{2-b}
\omega_{\theta}= \frac{u_{\theta} \omega_x}{ u_x}-\frac{\p_r B}{u_x}.
\end{equation}
\par Substituting \eqref{2-9} into the density equation and combining \eqref{2-a} and \eqref{2-b}, the system \eqref{1-4}  is  equivalent to the following  system
\begin{equation}\label{2-c}
\begin{cases}
\begin{aligned}
&(c^2(\rho)-u_x^2)\p_xu_x+(c^2(\rho)-u_r^2)\p_r u_r-u_xu_r(\p_xu_r+\p_ru_x)+u_r\frac{c^2(\rho)+u_{\theta}^2}{r}\\
&+(u_x\p_x\Phi+u_r\p_r\Phi)=0,\\
&u_x(\p_xu_r-\p_ru_x)=u_{\theta}\p_ru_{\theta}+\frac{u_{\theta}^2}{r}-\p_rB,\\
&(u_x\p_x+u_r\p_r)(ru_{\theta})=0,\\
&(u_x\p_x+u_r\p_r)B=0.\\
\end{aligned}
\end{cases}
\end{equation}
Under the modified Lagrangian transformation, the system \eqref{2-c} can be rewritten as
\begin{equation}\label{2-10}
\begin{cases}
(c^2(\rho)-u_x^2)\left(\p_{y_1}u_x-\frac{r\rho u_r}{2y_2}\p_{y_2}u_x\right)
+(c^2(\rho)-u_r^2)\left(\frac{r\rho u_x}{2y_2}\p_{y_2}u_r\right)+\frac{c^2(\rho)+u_{\theta}^2}{r}u_r\\
=-u_x\p_{y_1}\Phi
+u_xu_r\left(\p_{y_1}u_r-\frac{r\rho u_r}{2y_2}\p_{y_2}u_r+\frac{r\rho u_x}{2y_2}\p_{y_2}u_x\right),\\
u_x\left(\p_{y_1}u_r-\frac{r\rho u_r}{2y_2}\p_{y_2}u_r-\frac{r\rho u_x}{2y_2}\p_{y_2}u_x\right)
=\frac{r\rho u_x}{2y_2}u_{\theta}\p_{y_2}u_{\theta}+\frac{u_{\theta}^2}{r}-\frac{r\rho u_x}{2y_2}\p_{y_2}B,\\
\p_{y_1}(ru_{\theta})=0,\\
\p_{y_1}B=0.\\
\end{cases}
\end{equation}
\subsection{The reformulation of the Rankine-Hugoniot conditions and boundary conditions}\noindent
%\par Due to the mixed elliptic-hyperbolic structure of the steady axisymmetric  Euler system with an external force in the subsonic region, it is important to formulate proper boundary conditions and their compatibility.
\par Define
\begin{equation*}
\begin{aligned}
&v_1(y_1,y_2)= u_x(y_1,y_2)- u_b^+(y_1),\quad v_2(y_1,y_2)= u_r(y_1,y_2),\quad v_3(y_1,y_2)=u_{\theta}(y_1,y_2),\\
 &v_4(y_1,y_2)= B(y_1,y_2)-B_b^+(y_1),\quad\
 {\bf v}=(v_1,v_2,v_3,v_4),\qquad\ \
v_5(y_2)=\psi(y_2)-L_b.\\
\end{aligned}
\end{equation*}
Then the density and the pressure can be expressed as
\begin{equation}\label{2-15}
\begin{aligned}
&\rho(y_1,y_2)=\rho( {\bf v})=\bigg(\frac{\gamma-1}{\gamma }\bigg)^{\frac{1}{\gamma-1}}
\bigg({B}_b^++v_4+\Phi_b+\sigma \Phi_e-\frac12(u_{b}^++v_1)^2-\sum_{j = 2}^3 |v_j|^2\bigg)^{\frac{1}{\gamma-1}},\\
&P(y_1,y_2)=P( {\bf v})=\bigg(\frac{\gamma-1}{\gamma }\bigg)^{\frac{\gamma}{\gamma-1}}
\bigg({B}_b^++v_4+\Phi_b+\sigma \Phi_e-\frac12(u_{b}^++v_1)^2-\sum_{j = 2}^3 |v_j|^2\bigg)^{\frac{\gamma}{\gamma-1}}.\\
\end{aligned}
\end{equation}
 By the third equation in \eqref{2-7}, one derives
\begin{equation}\label{2-16}
\psi^{\prime}(y_2)=\frac{2y_2[u_r]}{r[P]}=
a_1 v_2(\psi(y_2),y_2) +h_1(\bm{\Psi}^-(L_b+v_5,y_2) - {\bm{\Psi}}_b^-(L_b+v_5),{\bf v}(\psi,y_2), v_5).
\end{equation}
where $a_1= \frac{1}{\kappa_b[{P}_b(L_b)]}>0$ and
\begin{equation*}
\begin{aligned}
&h_1(\bm{\Psi}^-(L_b+h_5,y_2) - {\bm{\Psi}}_b^-(L_b+h_5),{\bf v}(\psi,y_2), h_5)\\
&=\frac{2y_2[u_r]}{r[P]}-a_1 v_2(\psi(y_2),y_2)
=v_2\left(\frac{2y_2 }{r[ P]}-a_1\right)-\frac{ 2y_2u_r^-(\psi(y_2),y_2)}{r[ P]}.
\end{aligned}
\end{equation*}
The function $h_1$ is regarded as the error term which can be bounded by
\begin{equation}\label{2-17}
|h_1|\leq C\left(|\bm{\Psi}^-(L_b+v_5,y_2) - {\bm{\Psi}}_b^-(L_b+v_5) |+|{\bf v}(\psi,y_2)|^2+|v_5|^2\right).
\end{equation}
Using the equation \eqref{2-16}, we can eliminate the quantity $ \psi^{\prime} $ in the first two equations of \eqref{2-7} to obtain
\begin{equation}\label{2-18}\begin{cases}
\begin{aligned}
&[\frac{1}{\rho u_x}] + \frac{[u_r]}{[P]} \left[\frac{u_r}{u_x}\right]=0,\\
&[u_x+\frac{P}{\rho u_x}]+ \frac{[u_r]}{[P]} \left[\frac{P u_r}{u_x}\right]=0.\\
\end{aligned}
\end{cases}\end{equation}
\par Next, a simple calculation gives
\begin{equation}\label{2-19}
\begin{cases}
\begin{aligned}
&[\rho u_x]=\rho u_x\rho^- u_x^-\frac{[u_r]}{[P]}\frac{[u_r]}{[u_x]},\\
&[\rho u_x^2+P]=-\rho^- u_x^-\frac{[u_r]}{[P]}\frac{[Pu_r]}{[u_x]}+(\rho u_x^2+P)\rho^- u_x^-\frac{[u_r]}{[P]}\frac{[u_r]}{[u_x]}.\\
\end{aligned}
\end{cases}
\end{equation}
Denote $\dot{\rho}(y_1,y_2)= \rho(y_1,y_2)-{\rho}_b^+(y_1)$. Then the first equation in \eqref{2-19} implies that
\begin{equation*}
\begin{aligned}
&{\rho}_b^+(L_b) v_1(\psi,y_2)+{ u}_{b}^+(L_b) \dot{\rho}(\psi,y_2)\\
&=-[{\rho}_b { u}_{b}](\psi)+\rho u_x\rho^- u_x^-\frac{[u_r]}{[P]}\frac{[u_r]}{[u_x]}
+(\rho^- u_x^-)(\psi,y_2)-({\rho}_b^- { u}_{b}^-)(\psi)\\
&\quad-(u_x+u_{b}^+(L_b+v_5)
-u_{b}^+(L_b))\dot{\rho}(\psi,y_2)
-({\rho}_b^+(L_b+v_5)-{\rho}_b^+(L_b))v_1(\psi,y_2)\\
&:=R_{11}(\bm{\Psi}^-(L_b+v_5,y_2) - {\bm{\Psi}}_b^-(L_b+v_5),{\bf v}(\psi,y_2), v_5).
\end{aligned}
\end{equation*}
Similarly,  one can follow from the second equation in \eqref{2-19} that at $ (\psi(y_2),y_2)$, there holds
\begin{equation}\label{2-20}
\begin{cases}
\begin{aligned}
&{\rho}_b^+(L_b) v_1(\psi,y_2)+{ u}_{b}^+(L_b) \dot{\rho}(\psi,y_2)=
R_{11}(\bm{\Psi}^-(L_b+v_5,y_2) - {\bm{\Psi}}_b^-(L_b+v_5),{\bf v}(\psi,y_2), v_5),\\
&2({\rho}_b^+ {u}_{b}^+)(L_b)v_1(\psi,y_2)+ \left(({u}_{b}^+(L_b))^2+c^2({\rho}_b^+(L_b))\right)
\dot{\rho}(\psi,y_2)\\
&=-(( \rho_{b}^+-\rho_{b}^- )g)(L_b)v_5
+R_{12}(\bm{\Psi}^-(L_b+v_5,y_2) - {\bm{\Psi}}_b^-(L_b+v_5),{\bf v}(\psi,y_2), v_5),\\
\end{aligned}
\end{cases}
\end{equation}
where
\begin{equation*}
\begin{aligned}
R_{12}&=-\left\{[\rho_b {u}_{b}^2+ {P}_b](L_b +v_5)-(( \rho_{b}^+-\rho_{b}^- )g)(L_b)v_5\right\}+ (\rho^- (u_x^-)^2 +P^-)(\psi,y_2)-({\rho}_b^-({u}_{b}^-)^2+{P}_{b}^-)(\psi)\\
&\quad-\bigg\{(\rho u_x^2 +P)(\psi,y_2)-({\rho}_b^+({u}_{b}^+)^2+{P}_{b}^+)(\psi)-2({\rho}_b^+ {u}_{b}^+)(L_b)v_1\\
&\quad- \{({u}_{b}^+(L_b))^2+c^2({\rho}_b^+(L_b))\}\dot{\rho}
\bigg\}
-\rho^- u_x^-\frac{[u_r]}{[P]}\frac{[Pu_r]}{[u_x]}+(\rho u_x^2+P)\rho^- u_x^-\frac{[u_r]}{[P]}\frac{[u_r]}{[u_x]}.\\
\end{aligned}
\end{equation*}
 Note that
\begin{equation*}
\frac{d}{d x}(  \rho_b  u_b)(x)=0, \quad \frac{d}{d x}( \rho_b  u_b^2+ P_b)(x)=( \rho_b g)(x).
\end{equation*}
Then one has
\begin{equation*}
 [ \rho_b  u_b](L_b+v_5)=O(v_5^2),\quad [ \rho_b  u_b^2+  P_b]
 (L_b+v_5)-(( \rho_b^+- \rho_b^-) g)(L_b)v_5=O(v_5^2).
 \end{equation*}
  Therefore, there exists a constant $C>0$ depending only on the background solution such that
\begin{equation}\label{2-21}
|R_{1i}|\leq C\left(|\bm{\Psi}^-(L_b+v_5,y_2) - {\bm{\Psi}}_b^-(L_b+v_5) |+|{\bf v}(\psi,y_2)|^2+|v_5|^2\right).
\end{equation}
\par By solving the algebraic equations \eqref{2-20}, one derives
\begin{equation}\label{2-22}
\begin{cases}
  \dot{\rho}(\psi, y_2)  = b_1 v_5 (y_2) + R_1 (\bm{\Psi}^-(L_b+v_5,y_2) - {\bm{\Psi}}_b^-(L_b+v_5),{\bf v}(\psi,y_2), v_5),\\
   v_1 (\psi, y_2) = b_2 v_5 (y_2) + R_2 (\bm{\Psi}^-(L_b+v_5,y_2) - {\bm{\Psi}}_b^-(L_b+v_5),{\bf v}(\psi,y_2), v_5), \\
  \end{cases}
\end{equation}
where
\begin{equation*}
b_1 =  - \frac {(\rho_b^+(L_b)-\rho_b^-(L_b))g (L_b) }
{ c^2 ({\rho}_b^+ (L_b)) -  ({u}_b^+ (L_b))^2   }  <0, \quad
b_2  = \frac {{u}_b^+ (L_b)(\rho_b^+(L_b)-\rho_b^-(L_b)){g}_b (L_b) }
{ {\rho}_b^+ (L_b)( c^2 ({\rho}_b^+ (L_b)) - ({u}_b^+ (L_b))^2)  }  >0,
\end{equation*}
and
\begin{equation*}
\begin{aligned}
R_1  = & \frac {-2{u}_b^+ (L_b) R_{11} + R_{12}}{(c^2 ({\rho}_b^+ (L_b)) - ( {u}_b^+ (L_b))^2)} , \\
R_2 = & \frac {\left(( u_b^+(L_b))^2+c^2(  \rho_b^+(L_b))\right) R_{11} - {u}_b^+ (L_b)R_{12}}
{ {\rho}_b^+(L_b) ( c^2 ({\rho}_b^+ (L_b)) - ({u}_b^+ (L_b))^2)  }.
\end{aligned}
\end{equation*}
 Furthermore, it follow from the  Bernoulli's quantity and \eqref{2-22} that
\begin{equation}\label{2-23}
v_4 (\psi, y_2)  = b_3 v_5 (y_2) + R_3 (\bm{\Psi}^-(L_b+v_5,y_2) - {\bm{\Psi}}_b^-(L_b+v_5),{\bf v}(\psi,y_2), v_5),
\end{equation}
where
\begin{equation*}
\begin{aligned}
b_3  &=  \frac{(\rho_b^-(L_b)-\rho_b^+(L_b))g (L_b) }{\rho_b^+(L_b)}  < 0,\quad R_3= \frac {- {u}_b^+ (L_b) R_{11} + R_{12}}{{\rho}_b^+ (L_b)} + R_{13},\\
R_{13} & = ( u_b^+(L_b+v_5)- u_b^+(L_b))v_1(\psi,y_2) -\frac{c^2(\rho_b^+(L_b))}{\rho_b^+(L_b)}
\dot \rho(\psi,y_2) \\
& \quad +
\frac{1}{2}\sum_{i=1}^3v_j^2(\psi,y_2) +\frac{\gamma}{\gamma-1}(\rho^+(\psi,y_2)^{\gamma - 1}
-(\rho_b^+(\psi))^{\gamma - 1}) - \sigma \Phi_e (\psi,y_2). \\
\end{aligned}
\end{equation*}
\par Next, the superscript ``+" in $\rho_b^+,{u}_{b}^+, {P}_b^+,B_b^+ $ will be ignored to simplify the notations. To derive the boundary condition at the exit, it follows from  the definition of the Bernoulli's quantity and \eqref{1-8} that
\begin{equation}\label{2-24}
\begin{aligned}
v_1(L_2,y_2)=\frac{v_4(L_2,y_2)}{{u}_{b}(L_2)}
- \frac{\sigma P_{ex}(r(L_2,y_2))}{({\rho}_b{u}_{b})(L_2)}
-\frac{1}{2u_{b}(L_2)}\sum_{j=1}^3 v_j^2(L_2,y_2)-\frac{E({\bf v}(L_2,y_2))}{u_{b}(L_2)},
\end{aligned}
\end{equation}
where
\begin{equation*}
\begin{aligned}
E({\bf v}(y_1,y_2))&=\frac{\gamma}{\gamma-1}
(P({\bf v}))^{\frac{\gamma-1}{\gamma}}-\frac{\gamma}{\gamma-1}
{P}_b^{\frac{\gamma-1}{\gamma}}
-\frac{1}{{\rho}_b(y_1)}(P({\bf v})- {P}_b)-\sigma \Phi_e.
\end{aligned}
\end{equation*}
The boundary condition on the nozzle wall is
\begin{equation}\label{2-25}
v_2(y_1,\me)=\sigma f^{\prime}(y_1)( u_b(y_1)+v_1(y_1,\me)),  \ \ {\rm{on}}\ \  \Gamma_{p,y}.
\end{equation}
\par Finally, we derive the equations for $v_j$ $(j=1,\cdots,4)$. Note that
\begin{equation*}
\begin{cases}
(c^2 (\rho_b) - {u}_b^2) {u}_b'  =- {u}_bg , \\
 c^2 (\rho ) - u^2 - c^2 (\rho_b) + {u}_b^2
  =
(\gamma - 1) ( v_4 + \sigma\Phi_e)  - \frac {\gamma + 1}{2} v_1^2 - (\gamma + 1) {u}_b v_1
 - \frac {\gamma -1 }2 \sum_{j = 2}^3 v_j^2.
 \end{cases}
 \end{equation*}
 Then it follows from  \eqref{2-10} that
 \begin{equation}\label{2-26}
\begin{cases}
\begin{aligned}
&d_1(y_1)\p_{y_1}v_1
+d_2(y_1)\bigg(\p_{y_2}v_2+\frac{v_2}{y_2}\bigg)+ d_3(y_1)v_1+d_4(y_1)v_4=F_1({\bf v}),\\
&\p_{y_1}v_2-d_2(y_1)\p_{y_2}v_1
+d_5(y_1)\p_{y_2}v_4=F_2({\bf v}),\\
&\p_{y_1}(rv_3)=0,\\
&\p_{y_1}v_4=0,\\
\end{aligned}
\end{cases}
\end{equation}
where
\begin{equation*}
\begin{aligned}
&d_1(y_1)=1-M_b^2(y_1)>0, \ \ M_b^2(y_1)=\frac{u_b^2(y_1)}{c^2(\rho_b(y_1))}, \ \
d_2(y_1)=\frac{1}{\kappa_b(y_1)}>0, \ \ d_4(y_1)=\frac{(\gamma-1) u_b^\prime(y_1)}{c^2 ({\rho}_b(y_1)) },\\
 &d_3(y_1)=\frac { g (y_1)-(\gamma+1) (u_b u_b^\prime)(y_1)}{c^2 ({\rho}_b(y_1)) }=\frac {(1 + \gamma {M}_b^2) g (y_1)}{c^2 ({\rho}_b (y_1)) - {u}_b^2 (y_1)}>0, \ \ d_5(y_1)=\frac{1}{(\kappa_bu_b)(y_1)}>0,\\
&F_1({\bf v})=\frac{1}{c^2(\rho_b)}\bigg(-(c^2(\rho)-(u_b+v_1)^2-c^2(\rho_b)+u_{b}^2)\p_{y_1}v_1+\left(\frac{\gamma+1}{2} v_1^2+\frac{\gamma-1}{2}(v_2^2+v_3^2)\right)u_{b}^\prime\\
&\quad+(c^2(\rho)-(u_b+v_1)^2)\frac{r\rho v_2}{2y_2}\p_{y_2}v_1-c^2(\rho)\frac{r\rho (u_b+v_1)}{2y_2}\p_{y_2}v_2+c^2(\rho_b)\frac{r_b\rho_b u_b}{2y_2}\p_{y_2}v_2\\
&\quad+c^2(\rho)v_2^2\frac{r\rho (u_b+v_1)}{2y_2}\p_{y_2}v_2-\frac{c^2(\rho)}{r}v_2+\frac{c^2(\rho_b)}{r_b}v_2
-\frac{c^2(\rho)v_3^2}{r}v_2-\sigma (u_b+v_1)\p_{y_1}\Phi_e\\
&\quad+(u_b+v_1)v_2\bigg(\p_{y_1}v_2-\frac{r\rho v_2}{2y_2}\p_{y_2}v_2+\frac{r\rho (u_b+v_1)}{2y_2}\p_{y_2}v_1\bigg)\bigg),\\
&F_2({\bf v})=\frac{r\rho v_2}{2y_2}\p_{y_2}v_2+\frac{r\rho (u_b+v_1)}{2y_2}\p_{y_2}v_1-\frac{r_b\rho_b u_b}{2y_2}\p_{y_2}v_1+\frac{1}{u_b+v_1}\bigg(\frac{r\rho (u_b+v_1)}{2y_2}v_3\p_{y_2}v_3+\frac{v_3^2}{r}\\
&\quad-\frac{r\rho (u_b+v_1)}{2y_2}\p_{y_2}v_4+\frac{r_b\rho_b u_b}{2y_2}\p_{y_2}v_4\bigg).\\
\end{aligned}
\end{equation*}
Here $F_1({\bf v})$ and $F_2({\bf v})$ are  quadratic and high order terms.
\par Therefore,   solve the problem \eqref{1-4} with \eqref{1-6}, \eqref{1-7},   \eqref{1-8} and \eqref{1-11}  is equivalent to find a function $v_5$ defined on $[0,\me)$ and a vector function $(v_1,\cdots, v_4)$ defined on the ${\tilde\mi_+}:=\{(y_1,y_2): L_b +v_5(y_2)<y_1<L_2,0\leq y_2< \me\}$, which solves the system \eqref{2-26} with boundary conditions \eqref{2-16}, \eqref{2-22}-\eqref{2-25}.
\subsection{The coordinates transformation}\noindent
\par In order to deal with the free boundary value problem, it is convenient
to reduce it into a fixed boundary value problem by setting
\begin{equation}\label{2-27}
z_1=\frac{y_1-\psi(y_2)}{L_2-\psi(y_2)}(L_2-L_b) + L_b=\frac{y_1-L_b-w_5}{L_2-L_b-w_5}(L_2-L_b) + L_b,\ \quad z_2=y_2,
\end{equation}
where
\begin{equation*}
 w_5 (y_2) = \psi (y_2) - L_b.
 \end{equation*}
Then
\begin{equation*}\begin{cases}
\begin{aligned}
&y_1= z_1+\frac{L_2-z_1}{L_2-L_b}w_5=: D_0^{w_5},\\
&\p_{y_1}=\frac{L_2-L_b}{L_2-L_b-w_5(z_2)} \p_{z_1}=: D_1^{w_5},\\
&\p_{y_2}=\p_{z_2}+\frac{(y_1-L_2)\p_{z_2}w_5}{L_2-L_b-w_5}\p_{z_1}=: D_2^{w_5},
\end{aligned}
\end{cases}
\end{equation*}
and the domain $\tilde\mi_+$ becomes
\begin{equation*}
\Omega=\{(z_1,z_2): L_b< z_1 < L_2, 0\leq z_2< \me\}.
\end{equation*}
\par Set
\begin{equation*}
w_j(z_1,z_2)= v_j\left(z_1+\frac{L_2-z_1}{L_2-L_b}w_5,z_2\right), \ j=1,\cdots, 4, \ {\bf w}=(w_1,\cdots, w_4).
\end{equation*}
The functions $\rho(y_1,y_2)$ and $P(y_1,y_2)$ in \eqref{2-15} can be rewritten as
\begin{equation}\label{2-28}
\begin{aligned}
&\tilde\rho(z_1,z_2)=\bigg(\frac{\gamma-1}{\gamma }\bigg)^{\frac{1}{\gamma-1}}
\bigg({B}_b(D_0^{w_5})+w_4+\Phi_b(D_0^{w_5})+\sigma \Phi_e-\frac12(u_{b}(D_0^{w_5})+w_1)^2-\sum_{j = 2}^3 |w_j|^2\bigg)^{\frac{1}{\gamma-1}},\\
&\tilde P(z_1,z_2)=\bigg(\frac{\gamma-1}{\gamma }\bigg)^{\frac{\gamma}{\gamma-1}}
\bigg({B}_b(D_0^{w_5})+w_4+\Phi_b(D_0^{w_5})+\sigma \Phi_e-\frac12(u_{b}(D_0^{w_5})+w_1)^2-\sum_{j = 2}^3 |w_j|^2\bigg)^{\frac{\gamma}{\gamma-1}}.\\
\end{aligned}
\end{equation}
 Furthermore, after the coordinate transformation, \eqref{2-16}  is changed to be
\begin{equation}\label{2-29}
w_5^{\prime}(z_2)=
a_1 w_2(L_b,z_2) +h_1(\bm{\Psi}^-(L_b+w_5,z_2) - {\bm{\Psi}}_b^-(L_b+w_5),{\bf w}(L_b,z_2), w_5).
\end{equation}
\par  In the $z$-coordinates, the transonic shock problem can be reformulated as follows. By  the second equation in \eqref{2-22}, the shock front will be determined as follows
\begin{equation}\label{2-30}
w_5 (z_2) = \frac{1}{b_2}w_1(L_b,z_2)- \frac{1}{b_2} R_1(\bm{\Psi}^-(L_b+w_5,z_2) - {\bm{\Psi}}_b^-(L_b+w_5),{\bf w}(L_b,z_2), w_5).
\end{equation}

The function $w_3$ will be determined by the following equation
\begin{equation}\label{2-31}
\begin{cases}
\p_{z_1}(\tilde rw_3)=0,\\
w_3(L_b,z_2)=u_{\theta}^-(L_b+w_5(z_2),z_2),
\end{cases}
\end{equation}
where
\begin{equation*}
\tilde r(z_1,z_2)=2\bigg(\int_{0}^{z_2}\frac{s}{u_b(D_0^{w_5}) \tilde \rho(z_1,s) +(\tilde \rho w_1)(z_1,s)}\de s\bigg)^{\frac{1}{2}}.
\end{equation*}
  The Bernoulli's quantity $w_4$ will be determined by \eqref{2-23}. That is
\begin{equation}\label{2-32}
\begin{cases}
\p_{z_1} w_4=0,\\
w_4(L_b,z_2)= b_3 w_5 (z_2) + R_3 (\bm{\Psi}^-(L_b+w_5,z_2) - {\bm{\Psi}}_b^-(L_b+w_5),{\bf w}(L_b,z_2), w_5).
\end{cases}
\end{equation}
\par Next, the first and second equations in \eqref{2-26} can be rewritten as
\begin{equation}\label{2-33}
\begin{cases}
\begin{aligned}
&d_1(z_1)\p_{z_1}w_1
+d_2(z_1)\bigg(\p_{z_2}w_2+\frac{w_2}{z_2}\bigg)+ d_3(z_1)w_1+d_4(z_1)w_4=F_3({\bf w},w_5),\\
&\p_{z_1}w_2-d_2(z_1)\p_{z_2}w_1
+d_5(z_1)\p_{z_2}w_4=F_4({\bf w},w_5),\\
\end{aligned}
\end{cases}
\end{equation}
where
\begin{equation*}
\begin{aligned}
F_3({\bf w},w_5)&=\frac{1}{c^2( \rho_b)}\bigg(-(c^2(\tilde \rho)-(u_b+w_1)^2-c^2( \rho_b)+u_{b}^2)D_1^{w_5}w_1+\left(\frac{\gamma+1}{2} w_1^2+\frac{\gamma-1}{2}(w_2^2+w_3^2)\right)u_{b}^\prime\\
&\quad+(c^2(\tilde \rho)-(u_b+w_1)^2)\frac{\tilde r\tilde \rho w_2}{2y_2}D_2^{w_5}w_1-c^2(\tilde \rho)\frac{\tilde r\tilde \rho (u_b+w_1)}{2y_2}D_2^{w_5}w_2+c^2( \rho_b)\frac{ r_b \rho_b u_b}{2y_2}D_2^{w_5}w_2\\
&\quad+c^2(\tilde \rho)w_2^2\frac{\tilde r\tilde \rho (u_b+w_1)}{2y_2}D_2^{w_5}w_2-\frac{c^2(\tilde \rho)}{\tilde r}w_2+\frac{c^2( \rho_b)}{r_b}w_2
-\frac{c^2(\tilde \rho)w_3^2}{\tilde r}w_2-\sigma (u_b+w_1)D_1^{w_5}\Phi_e\\
\end{aligned}
\end{equation*}
\begin{equation*}
\begin{aligned}
&\quad+(u_b+w_1)w_2\bigg(D_1^{w_5}w_2-\frac{\tilde r\tilde \rho w_2}{2y_2}D_2^{w_5}w_2+\frac{\tilde r\tilde \rho (u_b+w_1)}{2y_2}D_2^{w_5}w_1\bigg)\bigg)-(d_1(D_0^{w_5})D_1^{w_5}w_1-d_1(z_1)\p_{z_1}w_1)\\
&\quad
-\bigg(d_2(D_0^{w_5})\bigg(D_2^{w_5}w_2+\frac{w_2}{D_0^{w_5}}\bigg)-
d_2(z_2)\bigg(\p_{z_2}w_2+\frac{w_2}{z_2}\bigg)\bigg)-( d_3(D_0^{w_5})- d_3(z_1))w_1\\
&\quad-( d_4(D_0^{w_5})- d_4(z_1))w_1,\\
F_4({\bf w},w_5)&=\frac{\tilde r\tilde \rho w_2}{2y_2}D_2^{w_5}w_2+\frac{\tilde r\tilde \rho (u_b+w_1)}{2y_2}D_2^{w_5}w_1-\frac{r_b \rho_b u_b}{2y_2}D_2^{w_5}w_1+\frac{1}{u_b+w_1}\bigg(\frac{\tilde r\tilde \rho (u_b+w_1)}{2y_2}w_3D_2^{w_5}w_3+\frac{w_3^2}{\tilde r}\\
&\quad-\frac{\tilde r\tilde \rho (u_b+w_1)}{2y_2}D_2^{w_5}w_4+\frac{ r_b \rho_b u_b}{2y_2}D_2^{w_5}w_4\bigg)-(D_1^{w_5}w_2-\p_{z_1}w_2)+(d_2(D_0^{w_5})D_2^{w_5}w_2-
d_2(z_1)\p_{z_2}w_1)\\
&\quad-(d_5(D_0^{w_5})D_2^{w_5}w_4-d_5(z_1)\p_{z_2}w_4).\\
\end{aligned}
\end{equation*}
\par Finally,
the boundary condition \eqref{2-24} at the exit becomes
\begin{equation}\label{2-34}
\begin{aligned}
w_1(L_2,z_2)=\frac{w_4(L_2,z_2)}{{u}_{b}(L_2)}
- \frac{\sigma P_{ex}(\tilde r(L_2,z_2))}{({\rho}_b{u}_{b})(L_2)}
-\frac{1}{2u_{b}(L_2)}\sum_{j=1}^3 w_j^2(L_2,z_2)-\frac{E({\bf w}(L_2,z_2))}{u_{b}(L_2)},
\end{aligned}
\end{equation}
where
\begin{equation*}
\begin{aligned}
E({\bf w}(z_1,z_2))&=\frac{\gamma}{\gamma-1}
(\tilde P({\bf w}))^{\frac{\gamma-1}{\gamma}}-\frac{\gamma}{\gamma-1}
{P}_b^{\frac{\gamma-1}{\gamma}}
-\frac{1}{{\rho}_b(z_1)}(\tilde P({\bf w})- {P}_b)-\sigma \Phi_e.
\end{aligned}
\end{equation*}
The boundary condition on the nozzle wall is
\begin{equation}\label{2-35}
w_2(z_1,\me)=\sigma f^{\prime}(D_0^{w_5})( u_b(D_0^{w_5})+w_1(z_1,\me)),  \ \ {\rm{on}}\ \  \Gamma_{p,z}=\{(z_1,z_2):L_b \leq z_1\leq L_2,\ z_2=\me\}.
\end{equation}

 \par Therefore, after the coordinates transformation \eqref{2-27}, the problem  \eqref{2-26} with boundary conditions \eqref{2-16}, \eqref{2-22}-\eqref{2-25}   is equivalent to solve the following problem.
\begin{problem}
Find a function $w_5$ defined on $[0,\me)$ and  a vector function $(w_1,\cdots, w_4)$ defined on the $\Omega$, which solves \eqref{2-31}-\eqref{2-32} and \eqref{2-33} with boundary conditions \eqref{2-29}, \eqref{2-30}, \eqref{2-34} and \eqref{2-35}.
\end{problem}
Theorem 1.3 then follows directly from the following result.
\begin{theorem}
Assume that the compatibility conditions \eqref{1-5} and \eqref{1-13} hold. There exist suitable positive constants $ \sigma_0 $ and $ C_\ast$ depending only on the background solution $ \mathbf{\Psi}_b $ defined in \eqref{1-3} and the boundary data $ \mathbf{\Psi}^-(L_1,\cdot) $, $ f $, $ P_{ex} $ ,$ \Phi_e $ such that if $ 0< \sigma \leq \sigma_0 $, the problem \eqref{2-31}-\eqref{2-33}  with boundary conditions \eqref{2-29}, \eqref{2-30}, \eqref{2-34} and \eqref{2-35}  has a unique  solution $  (w_1,w_2,w_3,w_4)(z_1,z_2) $ with the shock front $ \ms: z_1=w_5(z_2) $ satisfying the following properties.
\begin{item}[\rm{(1)}]
The function $ w_5(z_2) \in C_{3,\alpha}^{(-1-\alpha;\{\me\})}([0,\me)) $ satisfies
\begin{equation}\label{2-36}
\|w_5\|_{3,\alpha;[0,\me)}^{(-1-\alpha;\{\me\})} \leq C_\ast\sigma,
\end{equation}
and
\begin{equation}\label{2-37}
w_5^\prime(0)=w_5^{(3)}(0)=0.
\end{equation}
 \end{item}
\begin{item}[\rm{(2)}]
The solution $ (w_1,w_2,w_3,w_4)(z_1,z_2)\in \left(C_{2,\alpha}^{(-\alpha;\Gamma_{p,z})}(\Omega)\right)^4 $ satisfies    the estimate
\begin{equation}\label{2-38}
\sum_{i=1}^4\|w_i\|_{2,\alpha;\Omega}^{(-\alpha;\Gamma_{p,z})}\leq C_\ast\sigma
\end{equation}
 with the compatibility conditions
 \begin{equation}\label{2-39}
(w_2,{\p_{z_2}^2} w_2)(z_1,0)=(w_3, \p_{z_2}w_3)(z_1,0)= \p_{z_2}(w_1,w_4)(z_1,0)
 =0, \quad \forall z_1\in[L_b,L_2].
  \end{equation}
\end{item}
\end{theorem}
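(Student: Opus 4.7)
The plan is to solve Theorem 2.2 via a Schauder fixed-point iteration that exploits the hyperbolic/elliptic decoupling obtained in Section 2. Introduce the closed convex set
\[
\mathcal{K}_\sigma=\Big\{(\mathbf{w},w_5)\,:\,\|w_5\|^{(-1-\alpha;\{\me\})}_{3,\alpha;[0,\me)}+\sum_{i=1}^{4}\|w_i\|^{(-\alpha;\Gamma_{p,z})}_{2,\alpha;\Omega}\le C_\ast\sigma,\ \text{\eqref{2-37} and \eqref{2-39} hold}\Big\},
\]
and define an operator $\mathcal{T}:\mathcal{K}_\sigma\to\mathcal{K}_\sigma$ as follows. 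Given $(\tilde{\mathbf{w}},\tilde w_5)\in\mathcal{K}_\sigma$, freeze all nonlinear coefficients and right-hand sides at $(\tilde{\mathbf{w}},\tilde w_5)$ and solve the linearized problems in the order dictated by the decomposition. First, the Bernoulli equation \eqref{2-32} is explicit: $w_4(z_1,z_2)=b_3\tilde w_5(z_2)+R_3(\cdots)$, constant in $z_1$; likewise, integrating \eqref{2-31} produces $w_3$ as a quotient of radii times the incoming $u_\theta^-$. Next, solve the elliptic-type subsystem \eqref{2-33} for $(w_1,w_2)$, with Dirichlet data at $z_1=L_b$ from the second equation of \eqref{2-22} (with $\tilde w_5$ on the right), Dirichlet data at $z_1=L_2$ from \eqref{2-34}, an oblique-type condition at $z_2=\me$ from \eqref{2-35}, and the axis condition $w_2(z_1,0)=0$. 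Finally, update the shock position via \eqref{2-30}, which expresses the new $w_5$ in terms of the trace $w_1(L_b,\cdot)$.

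The crux of the argument is a weighted Schauder estimate for the linear $(w_1,w_2)$ subsystem. Since the coefficients $d_1=1-M_b^2>0$ and $d_2,d_3,d_4,d_5$ depend only on $z_1$, one may eliminate $w_2$ by cross-differentiating \eqref{2-33} to obtain a second-order divergence-form equation for $w_1$, uniformly elliptic in $\Omega$ after the natural even extension across the axis that removes the $1/z_2$ singularity. The boundary value problem carries Dirichlet data on $\{z_1=L_b\}\cup\{z_1=L_2\}$ and an oblique condition on $\{z_2=\me\}$. Because of the two corners $(L_b,\me)$ and $(L_2,\me)$ where the wall meets the shock and the exit, one cannot expect global $C^{2,\alpha}$; instead, the weighted space $C^{(-\alpha;\Gamma_{p,z})}_{2,\alpha}(\Omega)$ yields the correct regularity provided the data satisfy first-order compatibility at each corner, which follows from \eqref{1-5} and \eqref{1-13}. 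The resulting estimate takes the form
\[
\sum_{i=1}^2\|w_i\|^{(-\alpha;\Gamma_{p,z})}_{2,\alpha;\Omega}\le C\Big(\sigma+\|\tilde{\mathbf{w}}\|^{2}+\|\tilde w_5\|^{2}\Big).
\]
The extra derivative of $w_5$ required by \eqref{2-36} is then gained from the differential form \eqref{2-29} of the Rankine--Hugoniot relation: since $w_5'=a_1 w_2(L_b,\cdot)+h_1$ and $w_2(L_b,\cdot)\in C^{2,\alpha}$, one obtains $w_5\in C^{3,\alpha}$ with the claimed weighted bound.

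Choosing $C_\ast$ large and $\sigma_0$ small, $\mathcal{T}$ maps $\mathcal{K}_\sigma$ into itself. The axis compatibility \eqref{2-39} propagates through each step because the modified Lagrangian transformation was built precisely so that the coefficients in \eqref{2-33}--\eqref{2-35} have the correct parity in $z_2$ and the Jacobian \eqref{2-3} is smooth up to $z_2=0$. Continuity of $\mathcal{T}$ in a weaker norm, combined with the compact embedding of the weighted H\"older spaces, produces a fixed point by Schauder's theorem; uniqueness follows by contraction in a weaker norm, since the nonlinear terms $F_3,F_4,R_1,R_3,h_1$ together with the coefficient discrepancies coming from $D_0^{w_5},D_1^{w_5},D_2^{w_5}$ are all at least quadratic in the perturbation, so the difference of two solutions satisfies a linearized problem with source bounded by $O(\sigma)$ times the norm of the difference. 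The main obstacle is the weighted Schauder estimate itself: one must simultaneously handle the two physical corners $(L_b,\me)$ and $(L_2,\me)$ and the artificial singularity along $\{z_2=0\}$ coming from the axisymmetric reduction, while preserving the sign structure ($d_1,d_2,d_3,d_5>0$, $b_2>0$, $b_3<0$) that guarantees a Lopatinskii-type condition for the mixed boundary problem --- which is exactly what the modified Lagrangian transformation of Section 2.1 and the algebraic reformulation \eqref{2-30} of the Rankine--Hugoniot system from Section 2.3 are designed to enable.
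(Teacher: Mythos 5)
Your iteration scheme, as described, breaks the fixed-point structure that the paper relies on. You propose to freeze $\tilde w_5$ and use the second equation of \eqref{2-22} as \emph{Dirichlet} data $w_1(L_b,z_2)=b_2\tilde w_5(z_2)+R_2(\cdots)$, then recover the new shock position from \eqref{2-30} after solving the elliptic subsystem. But since \eqref{2-30} is exactly the same algebraic relation as \eqref{2-22} rearranged, the updated shock is
\[
w_5(z_2)=\tfrac1{b_2}w_1(L_b,z_2)-\tfrac1{b_2}R_1(\cdots)=\tilde w_5(z_2)+\tfrac1{b_2}\bigl(R_2(\cdots)-R_1(\cdots)\bigr),
\]
i.e.\ the map on $w_5$ is ``identity plus small perturbation'', with the perturbation of size $O(\sigma+\delta^2)$ but not of size $o(1)\cdot\|\tilde w_5\|$. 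This is not a contraction in $w_5$, and it does not even produce the self-map bound $\|w_5\|\le\delta$ out of $\|\tilde w_5\|\le\delta$ (one gets $\delta+C\sigma+C\delta^2>\delta$). The source of the trouble is that $\tilde w_5$ enters the Dirichlet data \emph{linearly} with the $O(1)$ coefficient $b_2$, so the claim that ``the nonlinear terms are all at least quadratic'' fails precisely at the boundary $z_1=L_b$.

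What the paper does instead is never to impose \eqref{2-30} as Dirichlet data with a frozen shock. It differentiates the Rankine--Hugoniot algebraic relation in $z_2$ and combines it with the ODE \eqref{2-29} for $w_5'$, obtaining the tangential-derivative condition $\p_{z_2}w_1(L_b,z_2)=a_1b_2\,w_2(L_b,z_2)+h_2(z_2)$ on the shock front (third line of \eqref{3-8}), in which the unknown $w_5$ has been eliminated and only the small error $h_2$ survives. The price is that $w_1(L_b,\cdot)$ is then determined only up to an additive constant, and that the exit condition and the interior equation remain genuinely nonlocal in $w_1(L_b,\cdot)$. The paper resolves both issues simultaneously by passing to the potential $\phi$ (\eqref{3-10}) and the shifted unknown $\phi_\ast=\phi+\Lambda$, where the free parameter $\Lambda=-w_5(\me)/a_1$ encodes the lost constant; the resulting nonlocal second-order problem \eqref{3-15}--\eqref{3-16} is then solved by a Lax--Milgram plus Fredholm alternative argument in Lemma 3.1 and Proposition 3.2, with uniqueness established by a spectral expansion and a maximum-principle argument on the ODEs \eqref{3-24}--\eqref{3-25} exploiting the sign $\lambda_3>0$. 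Your proposal omits this entire mechanism (the tangential-derivative reformulation, the free constant $\Lambda$, and the nonlocal elliptic analysis), and without it neither the contraction nor the Schauder self-map can be closed. The remaining parts of your sketch --- solving $w_3,w_4$ by transport, the weighted Schauder estimate with even extension across the axis, and the propagation of the compatibility conditions \eqref{2-39} --- are in line with the paper.
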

\section{The stability analysis}\noindent
\par In  this section, we first   construct a suitable iteration scheme to linearize the problem  \eqref{2-31}-\eqref{2-33} with boundary conditions \eqref{2-29}, \eqref{2-30}, \eqref{2-34} and \eqref{2-35} .  Especially,  a linear first order elliptic system with a nonlocal term  can be derived. Then one can introduce a potential function to reduce the first order elliptic system into a second order elliptic equation with a nonlocal term involving only the trace of the potential function on the shock front and   a free parameter. We  solve this  second order nonlocal elliptic  equation with  a free parameter and establish some  prior estimates and then complete the proof of Theorem 2.2.
\subsection{An iteration scheme}\noindent
\par  In this subsection, we develop an iteration scheme to linearize the problem \eqref{2-31}-\eqref{2-33} with boundary conditions \eqref{2-29}, \eqref{2-30}, \eqref{2-34} and \eqref{2-35}.
 The solution class $\mj$ consists of the vector functions $(w_1,\cdots, w_4,w_5)\in \bigg(\left(C_{2,\alpha}^{(-\alpha;\Gamma_{p,z})}(\Omega)\right)^4\times C_{3,\alpha}^{(-1-\alpha;\{\me\})}([0,\me)) \bigg)$ satisfying the estimate
\begin{equation}\label{3-1}
 \|({\bf w}, w_5)\|_{\mj}= \sum_{j=1}^4\|w_j\|_{2,\alpha;\Omega}^{(-\alpha;\Gamma_{p,z})}
+\|w_5\|_{3,\alpha;[0,\me)}^{(-1-\alpha;\{\me\})} \leq \delta,
\end{equation}
and the  compatibility conditions
\begin{equation}\label{3-2}
w_5^\prime(0)=w_5^{(3)}(0)=0, \ (w_2,{\p_{z_2}^2} w_2)(z_1,0)=(w_3, \p_{z_2}w_3)(z_1,0)= \p_{z_2}(w_1,w_4)(z_1,0)=0, \  \forall z_1\in[L_b,L_2],
 \end{equation}
    where $\delta>0$  to be determined later. Given $(\hat{\bf w}, \hat w_5)\in \mj$, we will construct an iterative procedure that generates a new $({\bf w},  w_5) \in \mj$, and thus one can define a mapping  from $\mj$ to itself by choosing a suitably small positive constant $\delta$.
   \par {\bf Step 1. The iteration scheme for $w_5$.}
\par The shock front $w_5$ is uniquely determined by
\begin{equation}\label{3-3}
w_5 (z_2) = \frac{1}{b_2}w_1(L_b,z_2)- \frac{1}{b_2} R_1(\bm{\Psi}^-(L_b+\h w_5,z_2) - {\bm{\Psi}}_b^-(L_b+\h w_5),\h{\bf  w}(L_b,z_2), \h w_5),
\end{equation}
provided that $w_1(L_b,z_2)$ is obtained.
\par {\bf Step 2. The iteration scheme for  $w_3$ and $w_4$.}
\par We solve the transport equations for the swirl velocity and the Bernoulli's quantity . The swirl velocity $w_3$ will be determined by
\begin{equation}\label{3-4}
\begin{cases}
\p_{z_1}(\hat rw_3)=0,\\
w_3(L_b,z_2)=u_{\theta}^-(L_b+\h w_5(z_2),z_2),
\end{cases}
\end{equation}
where
\begin{equation*}
\hat r(z_1,z_2)=2\bigg(\int_{0}^{z_2}\frac{s}{ u_b(D_0^{\h w_5}) \hat\rho( z_1,s)+(\hat\rho\hat w_1)(z_1,s)}\de s\bigg)^{\frac{1}{2}}.
\end{equation*}
Then $w_3$ can be solved as follows
\begin{equation}\label{3-5}
w_3(z_1,z_2)=\frac{\hat r(L_b,z_2) u_{\theta}^-(L_b+\h w_5(z_2),z_2)}
{\hat r(z_1,z_2)}.
\end{equation}
%Thus one has
%\begin{equation}\label{3-6}
 % w_3(z_1,0) \quad {\rm{and}}\quad\|w_3\|_{2,\alpha;\Omega}^{(-\alpha;\Gamma_{p,z})} \leq C_\ast \delta\sigma.
%\end{equation}
\par The Bernoulli's quantity $w_4$ satisfies
\begin{equation}\label{3-6}
\begin{cases}
\p_{z_1} w_4=0,\\
w_4(L_b,z_2)= b_3 w_5 (z_2) + R_3 (\bm{\Psi}^-(L_b+\h w_5,z_2) - {\bm{\Psi}}_b^-(L_b+\h w_5),\h{\bf w}(L_b,z_2), \h w_5).
\end{cases}
\end{equation}
This, together with \eqref{3-3}, yields that
\begin{equation}\label{3-7}
\begin{aligned}
w_4(z_1,z_2)&=b_3 w_5 (z_2) + R_3 (\bm{\Psi}^-(L_b+\h w_5,z_2) - {\bm{\Psi}}_b^-(L_b+\h w_5),\h{\bf w}(L_b,z_2), \h w_5)\\
&=\frac{b_3}{b_2}w_1(L_b,y_2)+R_4(\bm{\Psi}^-(L_b+\h w_5,z_2) - {\bm{\Psi}}_b^-(L_b+\h w_5),\h{\bf w}(L_b,z_2), \h w_5),
\end{aligned}
\end{equation}
where
\begin{equation*}
\begin{aligned}
&R_4(\bm{\Psi}^-(L_b+\h w_5,z_2) - {\bm{\Psi}}_b^-(L_b+\h w_5),\h{\bf w}(L_b,z_2), \h w_5)\\
&= R_3 (\bm{\Psi}^-(L_b+\h w_5,z_2) - {\bm{\Psi}}_b^-(L_b+\h w_5),\h{\bf w}(L_b,z_2), \h w_5)\\
&\quad- \frac{b_3}{b_2} R_1(\bm{\Psi}^-(L_b+\h w_5,z_2) - {\bm{\Psi}}_b^-(L_b+\h w_5),\h{\bf  w}(L_b,z_2), \h w_5).
\end{aligned}
\end{equation*}
\par {\bf Step 3. The iteration scheme for $w_1$ and $w_2$.}
\par We derive the equations for $w_1$ and $w_2$.  Firstly, it follows from \eqref{2-29} that
 \begin{equation}\label{3-a}
w_5^{\prime}(z_2)=
a_1 w_2(L_b,z_2) +h_1(\bm{\Psi}^-(L_b+\h w_5,z_2) - {\bm{\Psi}}_b^-(L_b+\h w_5),\h {\bf w}(L_b,z_2), \h w_5).
\end{equation}
Substituting \eqref{3-7} into \eqref{2-33} and combining \eqref{3-3}, \eqref{3-a}, \eqref{2-34} and \eqref{2-35}, one gets
\begin{equation}\label{3-8}
\begin{cases}
\begin{aligned}
&d_1(z_1)\p_{z_1}w_1
+d_2(z_1)\bigg(\p_{z_2}w_2+\frac{w_2}{z_2}\bigg)+ d_3(z_1)w_1+d_4(z_1)\frac{b_3}{b_2}w_1(L_b,z_2)=G_1(\h{\bf w},\h w_5),\\
&\p_{z_1}w_2-d_2(z_1)\p_{z_2}w_1
+d_5(z_1)\frac{b_3}{b_2}\p_{z_2}w_1(L_b,z_2)=G_2(\h{\bf w},\h w_5),\\
&\p_{z_2}w_1(L_b,z_2)=a_1b_2 w_2(L_b,z_2)+h_2(z_2),\\
&w_1(L_2,z_2)=
\frac{b_3}{b_2{u}_{b}(L_2)}w_1(L_b,z_2)+h_3(z_2),\\
&w_2(z_1,0)=0,\\
&w_2(z_1,\me )=h_4(z_1),\\
\end{aligned}
\end{cases}
\end{equation}
where
\begin{equation*}
\begin{aligned}
G_1(\h{\bf w},\h w_5)&=F_3(\h{\bf w},\h w_5)-d_4(z_1)R_4(\bm{\Psi}^-(L_b+\h w_5,z_2) - {\bm{\Psi}}_b^-(L_b+\h w_5),\h{\bf w}(L_b,z_2), \h w_5), \\
G_2(\h{\bf w},\h w_5)&=F_4(\h{\bf w},\h w_5)-d_5(z_1)\p_{z_2}R_4(\bm{\Psi}^-(L_b+\h w_5,z_2) - {\bm{\Psi}}_b^-(L_b+\h w_5),\h{\bf w}(L_b,z_2), \h w_5),\\
h_2(z_2)&=h_1 (\bm{\Psi}^-(L_b+\h w_5,z_2) - {\bm{\Psi}}_b^-(L_b+\h w_5),\h{\bf w}(L_b,z_2), \h w_5)\\
& \quad +\p_{z_2}R_1(\bm{\Psi}^-(L_b+\h w_5,z_2) - {\bm{\Psi}}_b^-(L_b+\h w_5),\h{\bf w}(L_b,z_2), \h w_5),\\
h_3(z_2)&=\frac{R_4(\bm{\Psi}^-(L_b+\h w_5,z_2) - {\bm{\Psi}}_b^-(L_b+\h w_5),\h{\bf w}(L_b,z_2), \h w_5)}{{u}_{b}(L_2)}
- \frac{\sigma P_{ex}(\hat r(L_2,z_2))}{({\rho}_b{u}_{b})(L_2)}\\
& \quad-\frac{1}{2u_{b}(L_2)}\sum_{j=1}^3 \h w_j^2(L_2,z_2)-\frac{E(\h {\bf w}(L_2,z_2))}{u_{b}(L_2)},\\
h_4(z_1)&=\sigma f^{\prime}(D_0^{\h w_5})( u_b(D_0^{\h w_5})+\h w_1(z_1,\me)).
\end{aligned}
\end{equation*}
Then it follows from the expressions of $ F_i $ and $R_i $ together with  direct computations that
\begin{equation}\label{3-9}
 \begin{aligned}
 \sum_{j=1}^2\|G_i(\h{\bf w},\h w_5)\|_{1,\alpha;\Omega}^{(1-\alpha;\Gamma_{p,z})}+ \|h_2\|_{1,\alpha;[0,\me)}^{(1-\alpha;\{\me\})}
 +\|h_3\|_{1,\alpha;[0,\me)}^{(-\alpha;\{\me\})}
 +\|h_4\|_{0,\alpha;[L_b,L_2]}
 \leq C\left(\sigma+
\delta^2\right).
 \end{aligned}
 \end{equation}

 \par Next, %we decompose $(w_1,w_2)$ as
%\begin{equation*}
%w_1(z_1,z_2)=\dot w_1(z_1,z_2) + \tilde w_1(z_1,z_2),\ \ \ w_2(z_1,z_2)=\dot w_2(z_1,z_2) + \tilde w_2(z_1,z_2),
%\end{equation*}
%where $(\dot w_1, \dot w_2)$ is the unique solution to the following problem
%\begin{equation}\label{3-10}
%\begin{cases}
%\begin{aligned}
the second eqaution in \eqref{3-8} can be rewritten as
\begin{equation*}
\p_{z_1}w_2-\p_{z_2}\bigg(d_2(z_1)w_1
-d_5(z_1)\frac{b_3}{b_2}w_1(L_b,z_2)-\int_{z_2}^{\me}G_2(\h{\bf w},\h w_5)(z_1,s)\de s\bigg)=0,
\end{equation*}
which implies that there exists a potential function $ \phi $ such that
\begin{equation}\label{3-10}
\p_{z_2}\phi=w_2, \ \ \p_{z_1}\phi=d_2(z_1)w_1
-d_5(z_1)\frac{b_3}{b_2}w_1(L_b,z_2)-\int_{z_2}^{\me}G_2(\h{\bf w},\h w_5)(z_1,s)\de s, \ \ \phi(L_b,\me)=0.
\end{equation}
Therefore, one obtians
\begin{equation*}
\begin{aligned}
&w_1(L_b,z_2)=b_4\bigg(\p_{z_1}\phi(L_b,z_2)+\int_{z_2}^{\me}G_2(\h{\bf w},\h w_5)(L_b,s)\de s\bigg), \ \ b_4=d_2(L_b)
-d_5(L_b)\frac{b_3}{b_2}=\frac{c^2(\rho_b(L_b))}{\kappa_b(L_b)}>0,\\
&w_1(z_1,z_2)=\frac{1}{d_2(z_1)}\bigg(\p_{z_1}\phi
+d_5(z_1)\frac{b_3b_4}{b_2}\p_{z_1}\phi(L_b,z_2)+\int_{z_2}^{\me}(G_2(\h{\bf w},\h w_5)(z_1,s)+b_4d_5(z_1)G_2(\h{\bf w},\h w_5)(L_b,s))\de s\bigg).
\end{aligned}
\end{equation*}
Then the problem  \eqref{3-8}  is reduced to
\begin{equation}\label{3-11}
\begin{cases}
\begin{aligned}
&\p_{z_1}(\lambda_1(z_1)\p_{z_1}\phi)
+\lambda_2(z_1)\bigg(\p_{z_2}^2\phi+\frac{\p_{z_2}\phi}{z_2}\bigg)-\lambda_3(z_1)\p_{z_1}\phi(L_b,z_2)
=\p_{z_1}\mg_1+\p_{z_2}\mg_2,\\
&\p_{z_2}(\p_{z_1}\phi(L_b,z_2)-b_5\phi(L_b,z_2))=q_2(z_2),\\
&\p_{z_1}\phi(L_2,z_2)= q_3(z_2),\\
&\p_{z_2}\phi(z_1,0)=0,\\
&\p_{z_2}\phi(z_1,\me)=h_4(z_1),\\
\end{aligned}
\end{cases}
\end{equation}
where
\begin{equation*}
\begin{aligned}
&\lambda_1(z_1)=\frac{\lambda_0(z_1)}{d_2(z_1)}>0, \quad \lambda_2(z_1)=\lambda_0(z_1) \frac{d_2}{d_1}(z_1)>0, \quad \lambda_0(z_1)=\text{exp}\bigg(\int_{L_b}^{z_1} \frac{d_3}{d_1}(s) \de s\bigg),\\
&\lambda_3(z_1)=-\frac{b_3b_4}{b_2}\lambda_0(z_1)\bigg( \frac{d_2d_3}{d_1d_5}(z_1)+\left(\frac{d_5}{d_2}\right)^\prime(z_1)+
\frac{d_4}{d_1}(z_1)\bigg)\\
&\qquad\ \ =-\frac{b_3b_4}{b_2}\lambda_0(z_1)\frac{(u_bg)(z_1)}{c^2(\rho_b(z_1))
-u_b^2(z_1)}
\frac{2+
(\gamma-1)M_b^4(z_1)}{1-M_b^2(z_1)}>0,\\
&\mg_1(z_1,z_2)=-\lambda_1(z_1)\int_{z_2}^{\me}(G_2(\h{\bf w},\h w_5)(z_1,s)+b_4d_5(z_1)G_2(\h{\bf w},\h w_5)(L_b,s))\de s,\\
&\mg_2(z_1,z_2)=\frac{\lambda_0(z_1)}{d_1(z_1)}\int_{0}^{z_2}G_1(\h{\bf w},\h w_5)(z_1,s)\de s,\quad
b_5=\frac{a_1b_2}{b_4}>0, \\
&q_2(z_2)=G_2(\h{\bf w},\h w_5)(L_b,z_2)+\frac{h_2(z_2)}{b_4},\
q_3(z_2)=-\int_{z_2}^{\me}G_2(\h{\bf w},\h w_5)(L_2,s)\de s+h_3(z_2).\\
\end{aligned}
\end{equation*}
\par The second equation in \eqref{3-11}  implies that
\begin{equation}\label{3-12}
\p_{z_1}\phi(L_b,z_2)-b_5(\phi(L_b,z_2)+\Lambda)=\tilde q_2(z_2),
\end{equation}
where
\begin{equation*}
\begin{aligned}
&\Lambda=-\frac{w_5(\me)}{a_1},\\
&\tilde q_2(z_2)=-\int_{z_2}^{\me}q_2(s)\de s+\frac{1}{b_4}R_1(\bm{\Psi}^-(L_b+\h w_5(\me),\me) - {\bm{\Psi}}_b^-(L_b+\h w_5(\me)),\h{\bf  w}(L_b,\me), \h w_5(\me)).
\end{aligned}
\end{equation*}
Substituting \eqref{3-12} into the first equation in \eqref{3-11} yields
 \begin{equation}\label{3-13}
\begin{cases}
\begin{aligned}
&\p_{z_1}(\lambda_1(z_1)\p_{z_1}\phi)
+\lambda_2(z_1)\bigg(\p_{z_2}^2\phi+\frac{\p_{z_2}\phi}{z_2}\bigg)-
\lambda_3(z_1)b_5(\phi(L_b,z_2)+\Lambda)
=\p_{z_1}\mg_1+\p_{z_2}\mg_2+\mg_3,\\
&\p_{z_1}\phi(L_b,z_2)-b_5(\phi(L_b,z_2)+\Lambda)=\tilde q_2(z_2),\\
&\p_{z_1}\phi(L_2,z_2)= \tilde q_3(z_2),\\
&\p_{z_2}\phi(z_1,0)=0,\\
&\p_{z_2}\phi(z_1,\me)=h_4(z_1),\\
\end{aligned}
\end{cases}
\end{equation}
where
\begin{equation*}
\mg_3(z_1,z_2)=\lambda_3(z_1)\tilde q_2(z_2), \quad \tilde q_3(z_2)=q_3(z_2).
\end{equation*}
Furthermore, it follows from \eqref{3-9} that the following estimate holds:
 \begin{equation}\label{3-14}
 \begin{aligned}
\sum_{j=1}^3\|\mg_j\|_{1,\alpha;\Omega}^{(-\alpha;\Gamma_{p,z})}+ \sum_{j=2}^3\|\tilde q_j\|_{1,\alpha;[0,\me)}^{(-\alpha;\{\me\})}
 +\|h_4\|_{0,\alpha;[L_b,L_2]}
 \leq C\left(\sigma+
 \|(\h{\bf w}, \h w_5)\|_{\mj}^2\right).
 \end{aligned}
 \end{equation}
  \subsection{Solving a second order elliptic equation with a nonlocal term }\noindent
  \par  Let
 $\phi_\ast(z_1,z_2)=\phi(z_1,z_2)+\Lambda $. Then \eqref{3-13} is equivalent to the following problem
  \begin{equation}\label{3-15}
\begin{cases}
\begin{aligned}
&\p_{z_1}(\lambda_1(z_1)\p_{z_1}\phi_\ast)
+\lambda_2(z_1)\bigg(\p_{z_2}^2\phi_\ast+\frac{\p_{z_2}\phi_\ast}{z_2}\bigg)-
\lambda_3(z_1)b_5\phi_\ast(L_b,z_2)
=\p_{z_1}\mg_1+\p_{z_2}\mg_2+\mg_3,\\
&\p_{z_1}\phi_\ast(L_b,z_2)-b_5\phi_\ast(L_b,z_2)=\tilde q_2(z_2),\\
&\p_{z_1}\phi_\ast(L_2,z_2)=\tilde q_3(z_2),\\
&\p_{z_2}\phi_\ast(z_1,0)=0,\\
&\p_{z_2}\phi_\ast(z_1,\me)=h_4(z_1).\\
\end{aligned}
\end{cases}
\end{equation}
In order to deal with the singularity near $ z_2=0 $, we rewrite the problem \eqref{3-15} by using the cylindrical coordinates transformation again. Define
\begin{equation*}
\eta_1=z_1,\ \eta_2=z_2\cos\tau,\ \eta_3=z_2\sin\tau,\ \tau\in[0,2\pi],
\end{equation*}
and
\begin{equation*}
\begin{aligned}
&\Omega_1=\{(\eta_1,\eta_2,\eta_3):L_b<\eta_1<L_2, \eta_2^2+\eta_3^2\leq \me^2\},\quad
\Omega_2=\{(\eta_2,\eta_3):\eta_2^2+\eta_3^2\leq \me^2\},\\
&\Gamma_{\eta}^{\prime}=\{\eta^{\prime}=(\eta_2,\eta_3):\eta_2^2+\eta_3^2= \me^2\},\quad
\Gamma_{p,\eta}=[L_b,L_2]\times \Gamma_{\eta}^{\prime},\\
&\Psi(\bm \eta)=\phi_\ast(\eta_1,\sqrt{\eta_2^2+\eta_3^2})
=\phi_\ast(\eta_1,|\eta^{\prime}|).\\
\end{aligned}
\end{equation*}
Then   $ \Psi(\bm \eta) $ satisfies
\begin{equation}\label{3-16}
\begin{cases}
\begin{aligned}
&\p_{\eta_1}(\lambda_1(\eta_1)\p_{\eta_1}\Psi)+\lambda_2(\eta_1)
\sum_{j=2}^{3}\p_{\eta_j}^2\Psi
-\lambda_3(\eta_1)b_5\Psi(L_b,\eta^{\prime})\\
&=\p_{\eta_1}\mg_1(\eta_1,|\eta^{\prime}|)+
\sum_{j=2}^{3}\p_{\eta_j}\left(\frac{\eta_j\mg_2(\eta_1,|\eta^{\prime}|)}
{|\eta^{\prime}|}\right)-\frac{\mg_2(\eta_1,|\eta^{\prime}|)}
{|\eta^{\prime}|}+\mg_3(\eta_1,|\eta^{\prime}|),\\
&\p_{\eta_1}\Psi(L_b,\eta^{\prime})-b_5\Psi(L_b,\eta^{\prime})
=\tilde q_2(|\eta^{\prime}|),\\
&\p_{\eta_1}\Psi(L_2,\eta^{\prime})=\tilde q_3(|\eta^{\prime}|),\\
&(\eta_2\p_{\eta_2}+\eta_3\p_{\eta_3})\Psi(\eta_1,\eta^{\prime})
=\me h_4(\eta_1).\\
\end{aligned}
\end{cases}
\end{equation}
\par Firstly, the weak solution to \eqref{3-16} can be obtained as follows. $\Psi\in H^1(\Omega_1)$ is said to be a weak solution to \eqref{3-16}, if for any $\varphi\in H^1(\Omega_1)$, the following equality holds:
\begin{equation}\label{3-17}
\ma(\Psi,\varphi)= \mf(\varphi),\ \ \forall \varphi\in H^1(\Omega_1),
\end{equation}
where
\begin{equation*}
\begin{aligned}
\ma(\Psi,\varphi)&=\iiint_{\Omega_1}\lambda_1(\eta_1)\p_{\eta}\Psi
\p_{\eta_1}\varphi
+\lambda_2(\eta_1)(\p_{\eta_2}\Psi \p_{\eta_2}\varphi+\p_{\eta_3}\Psi \p_{\eta_3}\varphi)+\lambda_3(\eta_1)b_5\Psi(L_b,\eta^{\prime})
\varphi(\eta_1,\eta^{\prime})
\de \eta_1\de \eta_2 \de \eta_3\\
&\quad +\iint_{\Omega_2}\lambda_1(L_b)b_5\Psi(L_b,\eta^{\prime})
\varphi(L_b,\eta^{\prime})\de \eta_2 \de \eta_3,\\
\mf(\varphi)&=\iiint_{\Omega}\mg_1\p_{\eta_1}\varphi+\sum_{j=2}^{3}
\frac{\eta_j\mg_2}
{|\eta^{\prime}|}\p_{\eta_i}\varphi-\bigg(\mg_3-\frac{\mg_2}
{|\eta^{\prime}|}\bigg)\varphi\de \eta_1\de \eta_2\de \eta_3
-\int_{L_b}^{L_2} \mg_2(\eta_1,\me)\varphi(\eta_1,\me)\de \eta_1\\
&\quad-\iint_{\Omega_2}\mg_1(L_2,|\eta^{\prime}|))\varphi(L_2,\eta^{\prime})
-\mg_1(L_b,|\eta^{\prime}|))\varphi(L_b,\eta^{\prime})\de \eta_2 \de \eta_3 +\int_{L_b}^{L_2}\me h_4(\eta_1)\varphi(\eta_1,\me)\de \eta_1\\
&\quad+\iint_{\Omega_2}\lambda_1(L_2)\tilde q_3(|\eta^{\prime}|))
   \varphi(L_2,\eta^{\prime})
-\lambda_1(L_b)\tilde q_2(|\eta^{\prime}|))\varphi(L_b,\eta^{\prime})\de \eta_2 \de \eta_3
.
\end{aligned}
\end{equation*}
\begin{lemma}
 There exists a positive constant $K$ depending only on the background solution such that the following problem
 \begin{equation}\label{3-18}
\begin{cases}
\begin{aligned}
&\p_{\eta_1}(\lambda_1(\eta_1)\p_{\eta_1}\Psi)+\lambda_2(\eta_1)
\sum_{j=2}^{3}\p_{\eta_j}^2\Psi
-\lambda_3(\eta_1)b_5\Psi(L_b,\eta^{\prime})+K\Psi\\
&=\p_{\eta_1}\mg_1(\eta_1,|\eta^{\prime}|)+
\sum_{j=2}^{3}\p_{\eta_j}\left(\frac{\eta_j\mg_2(\eta_1,|\eta^{\prime}|)}
{|\eta^{\prime}|}\right)-\frac{\mg_2(\eta_1,|\eta^{\prime}|)}
{|\eta^{\prime}|}+\mg_3(\eta_1,|\eta^{\prime}|),\\
&\p_{\eta_1}\Psi(L_b,\eta^{\prime})-b_5\Psi(L_b,\eta^{\prime})
=\tilde q_2(|\eta^{\prime}|),\\
&\p_{\eta_1}\Psi(L_2,\eta^{\prime})=\tilde q_3(|\eta^{\prime}|),\\
&(\eta_2\p_{\eta_2}+\eta_3\p_{\eta_3})\Psi(\eta_1,|\eta^{\prime}|)
=\me h_4(\eta_1),\\
\end{aligned}
\end{cases}
\end{equation}
has a unique weak solution in $ H^1(\Omega_1)$.
  %satisfying
%\begin{equation}\label{3-19}
%\|\Psi\|_{H^1(\Omega_1)}\leq C\left(\sum_{i=1}^3\|\mg_i\|_{H^1(\Omega_1)}+ \sum_{j=2}^3\|q_j\|_{L^2(\Omega_2)}
% +\|h_4\|_{L^2(L_b,L_2)}\right).
%\end{equation}
\end{lemma}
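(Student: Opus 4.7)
The plan is to apply the Lax-Milgram theorem on $H^1(\Omega_1)$. Associate to \eqref{3-18} the modified bilinear form
\begin{equation*}
\ma_K(\Psi,\varphi) := \ma(\Psi,\varphi) + K\iiint_{\Omega_1}\Psi\varphi\,\de\eta_1 \de\eta_2 \de\eta_3,
\end{equation*}
where $\ma$ is the form from \eqref{3-17}; the right-hand side of the weak formulation remains the functional $\mf$ from \eqref{3-17}. Continuity of $\ma_K$ on $H^1(\Omega_1)\times H^1(\Omega_1)$ will follow immediately from the $L^\infty$ bounds on $\lambda_1,\lambda_2,\lambda_3$, Cauchy-Schwarz, and the trace inequality $\|\Psi(L_b,\cdot)\|_{L^2(\Omega_2)}\leq C\|\Psi\|_{H^1(\Omega_1)}$. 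Boundedness of the linear functional $\mf$ on $H^1(\Omega_1)$ follows by the same tools together with the data estimate \eqref{3-14}.

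The heart of the argument will be coercivity. Using that $\lambda_1,\lambda_2\geq c_0>0$ and the positivity of the boundary integral $\lambda_1(L_b)b_5\|\Psi(L_b,\cdot)\|_{L^2(\Omega_2)}^2$ produced by the Robin-type condition at $\eta_1=L_b$, testing with $\varphi=\Psi$ gives
\begin{equation*}
\ma_K(\Psi,\Psi) \geq c_0\|\nabla_{\bm\eta}\Psi\|_{L^2(\Omega_1)}^2 + K\|\Psi\|_{L^2(\Omega_1)}^2 + \lambda_1(L_b)b_5\|\Psi(L_b,\cdot)\|_{L^2(\Omega_2)}^2 - \mathcal{I}_0,
\end{equation*}
where the nonlocal cross-term is
\begin{equation*}
\mathcal{I}_0 = \left|\iiint_{\Omega_1}\lambda_3(\eta_1)b_5\Psi(L_b,\eta')\Psi(\eta_1,\eta')\,\de\eta_1\de\eta_2\de\eta_3\right|.
\end{equation*}
Carrying out the $\eta_1$ integration first and applying Cauchy-Schwarz together with Young's inequality, I would bound
\begin{equation*}
\mathcal{I}_0\leq \tfrac{1}{2}\lambda_1(L_b)b_5\|\Psi(L_b,\cdot)\|_{L^2(\Omega_2)}^2 + C_\ast\|\Psi\|_{L^2(\Omega_1)}^2,
\end{equation*}
with $C_\ast$ depending only on the background solution. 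Choosing $K>C_\ast$ then absorbs both pieces of $\mathcal{I}_0$ and delivers the coercivity estimate $\ma_K(\Psi,\Psi)\geq c_1\|\Psi\|_{H^1(\Omega_1)}^2$ for some $c_1>0$ depending only on the background solution.

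With continuity, coercivity, and boundedness of $\mf$ in hand, Lax-Milgram will produce a unique $\Psi\in H^1(\Omega_1)$ satisfying $\ma_K(\Psi,\varphi)=\mf(\varphi)$ for every $\varphi\in H^1(\Omega_1)$, which is precisely the weak solution of \eqref{3-18}. The main obstacle I anticipate is the nonlocal term $\lambda_3(\eta_1)b_5\Psi(L_b,\eta')$, which couples interior values of $\Psi$ to its trace on $\{\eta_1=L_b\}$; introducing the auxiliary zero-order perturbation $K\Psi$, together with the positive Robin contribution at $\eta_1=L_b$, is exactly what makes the modified bilinear form coercive even though the original form in \eqref{3-17} need not be.
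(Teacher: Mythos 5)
Your proposal is correct and follows the same strategy as the paper: add the zero-order term $K\Psi$, verify continuity and boundedness of $\mf$, establish coercivity by absorbing the nonlocal cross-term, and invoke Lax--Milgram. The only (inessential) difference is in the absorption step — the paper writes the trace $\Psi(L_b,\eta')$ in terms of $\Psi(\eta_1,\eta')$ and $\int_{L_b}^{\eta_1}\p_{\eta_1}\Psi$ and absorbs into $\|\p_{\eta_1}\Psi\|_{L^2}^2$, while you absorb into the positive Robin boundary term $\lambda_1(L_b)b_5\|\Psi(L_b,\cdot)\|_{L^2(\Omega_2)}^2$; both yield the needed coercivity for $K$ large.
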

\begin{proof}
The system \eqref{3-18} has the following bilinear form on $ H^1(\Omega_1)\times H^1(\Omega_1)$:
\begin{equation}\label{3-20}
\ma_K(\Psi,\varphi)=\ma(\Psi,\varphi)+\iiint_{\Omega_1}\Psi\varphi\de \eta_1\de \eta_2\de \eta_3= \mf(\varphi),\ \ \forall \varphi\in H^1(\Omega_1).
\end{equation}
For any $\epsilon>0 $, one can  use Cauchy's inequality to  get
\begin{equation*}
\begin{aligned}
&\iiint_{\Omega_1}\Psi(L_b,\eta^{\prime})\Psi(\eta_1,\eta^{\prime})\de \eta_1\de \eta_2\de \eta_3\\
&\leq \frac{C_1}{\epsilon}\iiint_{\Omega_1}\Psi^2(\eta_1,\eta^{\prime}))\de \eta_1\de \eta_2\de \eta_3+\epsilon\iiint_{\Omega_1}(\p_{\eta_1}\Psi)^2(\eta_1,\eta^{\prime}))
\de \eta_1\de \eta_2\de \eta_3 .
\end{aligned}
\end{equation*}
 Note that $ \mg_2(\eta_1,0)=0 $. Thus the boundedness and coercivity of $\ma_K$ can be verified as follows
\begin{equation*}
\begin{aligned}
&|\ma_K(\Psi,\varphi)|\leq C\|\Psi\|_{H^1(\Omega_1)}\|\varphi\|_{H^1(\Omega)},\\
 &|\mf(\varphi)|\leq C\bigg(\sum_{j=1}^3\|\mg_j\|_{C^{0,\alpha}(\overline{\Omega_1})}+ \sum_{j=2}^3\|\tilde q_j\|_{C^{0,\alpha}(\overline{\Omega_2})}
 +\|h_4\|_{C^{0,\alpha}[L_b,L_2]}\bigg)\|\varphi\|_{H^1(\Omega_1)},
\end{aligned}
\end{equation*}
and
\begin{equation*}
\begin{aligned}
\ma_K(\Psi,\Psi)&=\iiint_{\Omega_1}\bigg(\lambda_1(\eta_1)
(\p_{\eta_1}\Psi)^2
+\lambda_2(\eta_1)\sum_{i=2}^3(\p_{\eta_i}\Psi)^2 +\lambda_3(\eta_1)b_5\Psi(L_b,\eta^{\prime})
\Psi(\eta,\eta^{\prime})+K\Psi^2\bigg)
\de \eta_1\de \eta_2 \de \eta_3\\
& +\iint_{\Omega_2}\lambda_1(L_b)b_5(\Psi(L_b,\eta^{\prime}))^2
\de \eta_2 \de \eta_3,\\
&\geq C_*\bigg(\|\n\Psi\|_{L^2(\Omega_1)}^2
+\|\Psi(L_b,\cdot)\|_{L^2(\Omega_2)}^2\bigg)+K\|\Psi\|_{L^2(\Omega_1)}^2
-\frac{C_*}{4}\|\p_{\eta_1}\Psi\|_{L^2(\Omega_1)}^2\\
&\quad-\frac{C_*}{4}\|\Psi(L_b,\cdot)\|_{L^2(\Omega_2)}^2
-\tilde{C}_*\|\Psi\|_{L^2(\Omega_1)}^2\\
&\geq \frac{C_*}{2}\bigg(\|\n \Psi\|_{L^2(\Omega_1)}^2+\|\Psi(L_b,\cdot)\|_{L^2(\Omega_2)}^2\bigg)
+\frac{K}{2}\|\Psi\|_{L^2(\Omega_1)}^2
\end{aligned}
\end{equation*}
provided that $K$ is sufficiently large. Then the existence and uniqueness of $H^1 (\Omega_1)$ solution $\Psi$ to \eqref{3-16} can be obtained by using the Lax-Milgram theorem, which completes the proof of Lemma 3.1.
\end{proof}
\par Now we are going to solve the problem \eqref{3-16}.
\begin{proposition}
Suppose that $(\mg_1,\mg_2,,\mg_3)\in \left(C_{1,\alpha}^{(-\alpha;\Gamma_{p,\eta})}(\Omega_1)\right)^3 $,$ \mg_2(\eta_1,0)=0 $, $(\tilde q_2,\tilde q_3)\in \left(C_{1,\alpha}^{(-\alpha;\Gamma_{\eta}^{\prime})}(\Omega_2)\right)^2 $ and $h_4\in (C^{0,\alpha}([L_b,L_2])$. Then the problem \eqref{3-16} has a unique solution $\Psi \in C_{2,\alpha}^{(-1-\alpha;\Gamma_{p,\eta})}(\Omega_1)$ satisfying the estimate
\begin{equation}\label{3-21}
\|\Psi\|_{2,\alpha;\Omega_1}^{(-1-\alpha;\Gamma_{p,\eta})}\leq C_\sharp\bigg(\sum_{j=1}^3\|\mg_j\|
_{1,\alpha;\Omega_1}^{(-\alpha;\Gamma_{p,\eta})}+ \sum_{j=2}^3\|\tilde q_j\|_{1,\alpha;\Omega_2}^{(-\alpha;\Gamma_{\eta}^{\prime})}
 +\|h_4\|_{0,\alpha;[L_b,L_2]}\bigg),
\end{equation}
where the constant $C_\sharp$ depends only on the coefficients $\lambda_i$, $i=1,2, 3$, $ b_5 $ and thus depends only on the background solution.
\end{proposition}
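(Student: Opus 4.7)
The plan is to prove Proposition 3.2 in three stages: (i) establish $H^1$ solvability via Lemma 3.1 together with a continuation/Fredholm argument, (ii) secure $H^1$ \emph{a priori} estimates by a refined energy identity that exploits the sign structure of the nonlocal term, and (iii) upgrade to the weighted $C^{2,\alpha}$ regularity claimed in \eqref{3-21}.

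For Stage (i), Lemma 3.1 already produces, for some large constant $K$, a unique $H^1$ solution of the auxiliary problem obtained by adding $K\Psi$ to the left-hand side. To remove $K$, I consider the one-parameter family of operators $L_t\Psi := \p_{\eta_1}(\lambda_1 \p_{\eta_1}\Psi) + \lambda_2 \sum_{j=2}^3 \p_{\eta_j}^2\Psi - \lambda_3 b_5 \Psi(L_b,\eta') + (1-t)K\Psi$, $t \in [0,1]$. At $t=0$ Lemma 3.1 applies; at $t=1$ we recover the operator of \eqref{3-16}. Because the nonlocal lower-order term factors through the compact trace map $H^1(\Omega_1) \to L^2(\Omega_2)$ at $\eta_1 = L_b$, $L_t - L_0$ is compact, so the continuity method reduces existence at $t = 1$ to a uniform $H^1$ \emph{a priori} bound, equivalently (by Fredholm alternative) to uniqueness.

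For Stage (ii), I test the homogeneous equation with $\Psi$ and integrate by parts. The Robin condition $\p_{\eta_1}\Psi(L_b,\eta') - b_5 \Psi(L_b,\eta') = 0$ produces the positive boundary mass $\lambda_1(L_b) b_5 \int_{\Omega_2}\Psi(L_b,\eta')^2\,d\eta'$, and the remaining boundary conditions contribute nonnegatively. The nonlocal term yields $b_5\int_{\Omega_1}\lambda_3(\eta_1)\Psi(L_b,\eta')\Psi(\eta_1,\eta')\,d\eta$; splitting $\Psi(\eta_1,\eta') = \Psi(L_b,\eta') + \int_{L_b}^{\eta_1}\p_{\eta_1}\Psi(s,\eta')\,ds$, the first piece gives a further positive multiple of $\int\Psi(L_b,\cdot)^2$, while the cross term is controlled by Cauchy--Schwarz as $\varepsilon\|\p_{\eta_1}\Psi\|_{L^2(\Omega_1)}^2 + C_\varepsilon\|\Psi(L_b,\cdot)\|_{L^2(\Omega_2)}^2$. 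Choosing $\varepsilon$ small, the gradient piece is absorbed into $\int\lambda_1(\p_{\eta_1}\Psi)^2$, while the trace piece is absorbed into the combined positive mass $(\lambda_1(L_b) b_5 + b_5\int_{L_b}^{L_2}\lambda_3\,d\eta_1)\int\Psi(L_b)^2$. This is where the signs $\lambda_1,\lambda_2,\lambda_3,b_5>0$ encoded in the background solution are crucially used. Coercivity, and hence uniqueness, follow, closing Stage (i).

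For Stage (iii), I first observe that once an $H^1$ solution exists, its trace $\Psi(L_b,\cdot) \in H^{1/2}(\Omega_2)$, so the nonlocal forcing $-\lambda_3(\eta_1) b_5 \Psi(L_b,\eta')$ can be treated as a given lower-order term and a standard bootstrap via interior and boundary Schauder theory raises the regularity to $C^{2,\alpha}$ away from the edge $\Gamma_{p,\eta}$. At $\eta_1 = L_b$ one uses the Schauder theory for the oblique/Robin problem; at $\eta_1 = L_2$ and on the lateral cylinder, the Neumann-type Schauder estimates apply. The passage to $C^{1,\alpha}$ of the trace $\Psi(L_b,\cdot)$ is iterated, giving the nonlocal term the required regularity. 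Near the corner $\Gamma_{p,\eta}$, where Neumann-type conditions meet, I invoke weighted Hölder estimates of Lieberman type, using the distance function to $\Gamma_{p,\eta}$ as weight, to produce the norm $\|\Psi\|_{2,\alpha;\Omega_1}^{(-1-\alpha;\Gamma_{p,\eta})}$ with the indicated exponent. The main obstacle is Stage (ii): the nonlocal coupling between $\Psi$ and its trace at $\eta_1 = L_b$ defeats any naive coercivity attempt, and it is precisely the combination of the positive Robin contribution $\lambda_1(L_b) b_5$ with the positive average $b_5\int\lambda_3\,d\eta_1$ that absorbs the cross term. A secondary difficulty is the weighted estimate at $\Gamma_{p,\eta}$, which motivates the exponent $-1-\alpha$ appearing in \eqref{3-21}.
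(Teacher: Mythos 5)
Your Stage (ii) contains a genuine gap, and it is precisely the gap the paper's proof is designed to circumvent. Testing the homogeneous equation with $\Psi$ and writing $\Psi(\eta_1,\eta')=\Psi(L_b,\eta')+\int_{L_b}^{\eta_1}\p_{\eta_1}\Psi$, the nonlocal term indeed produces a positive piece $b_5\bigl(\int_{L_b}^{L_2}\lambda_3\bigr)\|\Psi(L_b,\cdot)\|_{L^2(\Omega_2)}^2$ and a cross term. But the Cauchy--Schwarz/Young bound on the cross term reads
\begin{equation*}
\Bigl|b_5\int_{\Omega_1}\lambda_3\,\Psi(L_b,\eta')\!\int_{L_b}^{\eta_1}\!\p_{\eta_1}\Psi\Bigr|
\le \varepsilon\|\p_{\eta_1}\Psi\|_{L^2(\Omega_1)}^2
+\frac{C\,b_5^2\|\lambda_3\|_\infty^2(L_2-L_b)}{4\varepsilon}\|\Psi(L_b,\cdot)\|_{L^2(\Omega_2)}^2,
\end{equation*}
so absorbing both pieces forces a quantitative inequality of the type
$4\bigl(\min\lambda_1\bigr)\bigl(\lambda_1(L_b)b_5+b_5\int\lambda_3\bigr)\ge C\,b_5^2\|\lambda_3\|_\infty^2(L_2-L_b)$. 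There is nothing in the structure of the background solution guaranteeing this: $\lambda_3$ is determined by the external force $g$ and need not be small. The positivity of $\lambda_1,\lambda_2,\lambda_3,b_5$ controls only the \emph{signs}, not the \emph{magnitudes}, so the coercivity you claim cannot be established this way, and Stage (i) (via the continuity method) breaks down with it.

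The paper's proof sidesteps energy estimates entirely for the uniqueness step. It expands $\Psi=\sum_i X_i(\eta_1)\beta_i(\eta_2,\eta_3)$ in the eigenfunctions of the Laplacian on the cross section with Neumann boundary condition, so that each Fourier mode $X_i$ satisfies the scalar ODE
$\lambda_1 X_i''+\lambda_1' X_i'-\lambda_2\mu_i X_i - b_5\lambda_3(\eta_1)X_i(L_b)=0$
with $X_i'(L_b)=b_5X_i(L_b)$ and $X_i'(L_2)=0$. If $X_i(L_b)>0$ the forcing $b_5\lambda_3 X_i(L_b)$ is strictly positive and the boundary derivative at $L_b$ is strictly positive, so a maximum-principle / Hopf-lemma argument excludes an interior maximum and a boundary maximum at $L_2$, forcing a contradiction; the case $X_i(L_b)<0$ is symmetric, and $X_i(L_b)=0$ reduces to a standard Neumann problem where the maximum principle gives $X_i\equiv 0$. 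This argument uses only the \emph{sign} of $\lambda_3$ and is therefore robust to its size, which is exactly what your energy identity cannot achieve. You should replace Stage (ii) by this eigenfunction-expansion plus maximum-principle argument. (Your Stages (i) and (iii) -- Fredholm from Lemma~3.1, and the regularity bootstrap to $C_{2,\alpha}^{(-1-\alpha;\Gamma_{p,\eta})}$ via Lieberman-type estimates -- are consistent with the paper, so once uniqueness is repaired the rest goes through.)
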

\begin{proof}
Firstly, we show that any $H^1(\Omega_1)$ weak solution to \eqref{3-16} has a
higher regularity $ C_{2,\alpha}^{(-1-\alpha;\Gamma_{p,\eta})}(\Omega_1) $.  To this end,  the first equation in \eqref{3-16} can be  rewritten as  a standard second order elliptic equation for $\Psi$:
\begin{equation*}
\begin{aligned}
&\p_{\eta_1}(\lambda_1(\eta_1)\p_{\eta_1}\Psi)+\lambda_2(\eta_1)
\sum_{j=2}^{3}\p_{\eta_j}^2\Psi
\\
&=\lambda_3(\eta_1)b_5\Psi(L_b,\eta^{\prime})+\p_{\eta_1}\mg_1(\eta_1,|\eta^{\prime}|)+
\sum_{j=2}^{3}\p_{\eta_j}\left(\frac{\eta_j\mg_2(\eta_1,|\eta^{\prime}|)}
{|\eta^{\prime}|}\right)-\frac{\mg_2(\eta_1,|\eta^{\prime}|)}
{|\eta^{\prime}|}+\mg_3(\eta_1,|\eta^{\prime}|).
\end{aligned}
\end{equation*}
Since $\Psi\in H^1(\Omega_1)$,   the trace theorem implies that $ \Psi(L_b,\eta^{\prime})\in L^2(\Omega_2) $. Then one can apply  \cite[Theorems 5.36 and 5.45]{LG13} to get
\begin{equation}\label{3-q}
\begin{aligned}
\|\Psi\|_{C^{0,\alpha}(\overline{\Omega_1})}&\leq C_\sharp\bigg(\|b_5\lambda_3(z_1)\Psi(L_b,\eta^{\prime})\|_{L^2(\Omega_2)}
+\|\mg_1\|_{L^4(\Omega_1)}+\sum_{j=2}^3\bigg\|\frac{\eta_j\mg_2(\eta_1,|\eta^{\prime}|)}
{|\eta^{\prime}|}\bigg\|_{L^4(\Omega_1)}\\
&\quad\quad\ \ +\bigg\|\frac{\mg_2(\eta_1,|\eta^{\prime}|)}
{|\eta^{\prime}|}\bigg\|_{L^2(\Omega_1)}
+\|\mg_3\|_{L^2(\Omega_1)}+\sum_{j=2}^3\|\tilde q_j\|_{L^3(\Omega_2)}+\|h_4\|_{L^3(L_b,L_2)}
\bigg)\\
&\leq C_\sharp\bigg(\|\Psi\|_{H^1(\Omega_1)}+\sum_{i=1}^3\|\mg_i\|
_{1,\alpha;\Omega_1}^{(-\alpha;\Gamma_{p,\eta})}+ \sum_{j=2}^3\|\tilde q_j\|_{1,\alpha;\Omega_2}^{(-\alpha;\Gamma_{\eta}^{\prime})}
 +\|h_4\|_{0,\alpha;[L_b,L_2]}\bigg).
\end{aligned}
\end{equation}
Hence $ b_5\lambda_3(z_1)\Psi(L_b,\eta^{\prime})\in C^{\alpha}(\overline{\Omega_1}) $ and the Schauder estimate in \cite[Theorem 4.6]{LG13}
 would imply that
\begin{equation}\label{3-22}
\|\Psi\|_{2,\alpha;\Omega_1}^{(-1-\alpha;\Gamma_{p,\eta})}\leq C_\sharp\bigg(\|\Psi\|_{H^1(\Omega_1)}+\sum_{i=1}^3\|\mg_i\|
_{1,\alpha;\Omega_1}^{(-\alpha;\Gamma_{p,\eta})}+ \sum_{j=2}^3\|\tilde q_j\|_{1,\alpha;\Omega_2}^{(-\alpha;\Gamma_{\eta}^{\prime})}
 +\|h_4\|_{0,\alpha;[L_b,L_2]}\bigg).
\end{equation}
\par  Next, to show the uniqueness of   the $H^1 (\Omega_1)$ weak solution to \eqref{3-16}, we first investigate the following eigenvalue problem
 \begin{equation}\label{3-23}\begin{cases}
\p_{\eta_2}^2 \beta+ \p_{\eta_3}^2 \beta +\mu \beta=0,\ \ {\rm{in}} \ \Omega_2,\\
(\eta_2\p_{\eta_2} + \eta_3 \p_{\eta_3}) \beta=0,\ \ \ {\rm{on}}\ \ \p \Omega_2.
\end{cases}
\end{equation}
By the standard elliptic theroy in \cite{GT82}, for $\mu<0 $, the problem  \eqref{3-23} is  uniquely solvable.  Note that
\begin{equation*}
\mu \|\beta\|_{L^2(\Omega_2)}^2= \iint_{\Omega_2} \left(\p_{\eta_2} \beta)^2+ (\p_{\eta_3} \beta)^2 \right)\de\eta_2 \de\eta_3.
\end{equation*}
Then there exists a sequence of eigenvalues $0=\mu_1<\mu_2\leq \mu_3 \leq \cdots \leq \mu_i\to\infty$ and the corresponding eigenfunctions $\{\beta_i(\eta_2,\eta_3)\}_{i=1}^{\infty}\in C^{\infty}(\overline{\Omega_2})$ associated with $\mu_i$. The functions $\{\beta_i(\eta_2,\eta_3)\}_{i=1}^{\infty}$ constitute a complete orthonormal basis of $L^2(\Omega_2)$ and are also orthogonal in $H^1(\Omega_2)$.
\par Since $ \Psi\in C^{1,\alpha}(\overline{\Omega_1})\cap C^{2,\alpha}(\Omega_1) $, then $ \Psi$
can be represented as
\begin{equation*}
\Psi(\bm\eta)= \sum_{i=0}^{\infty} X_{i}(\eta_1)\beta_i(\eta_2,\eta_3),
\end{equation*}
where
$X_i(z_1)= \int_{L_b}^{L_2} \Psi(\bm\eta) \beta_i(\eta_2,\eta_3) \de \eta_2 \de \eta_3\in C^{1,\alpha}([L_b,L_2])\cap C^{2,\alpha}((L_b,L_2))$ solves the problem
\begin{equation}\label{3-24}
\begin{cases}
\begin{aligned}
&\lambda_1(\eta_1)X_i^{''}(\eta_1)+\lambda_1^{'}(\eta_1)X_i^{'}(\eta_1)
-\lambda_2(z_1)\mu_iX_i(\eta_1)
-b_5\lambda_3(\eta_1)X_i(L_b)
=0,\\
&X_i^{'}(L_b)-b_5X_i(L_b)=0,\\
&X_i^{'}(L_2)=0.\\
\end{aligned}
\end{cases}
\end{equation}
Suppose that $X_{i} (L_b) = 0$, then the maximum principle and Hopf's lemma show that $X_{i} (\eta_1) \equiv 0 $ for any $\eta_1 \in [L_b, L_2]$. Suppose that $X_{i} (L_b) > 0$, then
\begin{equation}\label{3-25}
\begin{cases}
\begin{aligned}
&\lambda_1(\eta_1)X_i^{''}(\eta_1)+\lambda_1^{'}(\eta_1)
X_i^{'}(\eta_1)-\lambda_2(\eta_1)\mu_iX_i(\eta_1)
=b_5\lambda_3(\eta_1)X_i(L_b)>0
,\\
&X_i^{'}(L_b)=b_5X_i(L_b)>0,\\
&X_i^{'}(L_2)=0.\\
\end{aligned}
\end{cases}
\end{equation}
Assume that there exists $\tilde{\eta}_1 \in [L_b, L_2]$, such that $X_{i} (\tilde{\eta}_1) = \max_{\eta_1 \in [L_b, L_2]} X_{i} (\eta_1) > 0 $. Then the second and the third equations in \eqref{3-25} imply that $\tilde{\eta}_1 \in ( L_b, L_2]$. If $\tilde{\eta}_1 \in (L_b, L_2)$, then $X_{i}^{'} (\tilde{\eta}_1) = 0$, $X_{i}^{''} (\tilde{\eta}_1) \leq 0$, which contradicts to the first equation in \eqref{3-25}. If $\tilde{\eta}_1 = L_2$, then Hopf's lemma yields that $X_{i}' (\eta_2) > 0$, which also contradicts. Similarly, $X_{i} (L_b) < 0$ will induce a contradiction. Hence, $X_{i} (\eta_1) \equiv 0$ for all $\eta_1 \in [L_b, L_2]$. Therefore we get $\Psi \equiv 0$ in $\Omega_1$. We complete the proof of the uniqueness of the $H^1(\Omega_1)$ weak solution to \eqref{3-16}.
\par Next, we can  use Lemma 3.1 and the Fredholm alternatives for elliptic equations and the arguments in \cite[Theorem 8.6]{GT82} to deduce that there exists a unique $H^1 (\Omega_1)$ weak solution to \eqref{3-16}. Furthermore,  the uniqueness helps us to derive the estimate \eqref{3-21}  from \eqref{3-22}.    The invariance of the equation and the boundary datum in \eqref{3-16} under the rotation transform in $(\eta_2,\eta_3)$ plane shows that $ \Psi $ is axisymmetric. This completes the proof of this proposition.
\end{proof}
\par  Proposition 3.2 shows  that  $ \phi_\ast(z_1, z_2) $ is uniquely determined, then $  \Lambda=\phi_\ast(L_b, \me)  $. Hence this proposition actually implies that the following estimate for $w_1$ and $w_2$.
\begin{proposition}
The problem \eqref{3-8} has a unique solution $ (w_1,w_2)\in \left(C_{2,\alpha}^{(-\alpha;\Gamma_{p,z})}(\Omega)\right)^2 $ satisfying  the estimate
\begin{equation}\label{3-26}
\|w_1\|_{2,\alpha;\Omega}^{(-\alpha;\Gamma_{p,z})}
+\|w_2\|_{2,\alpha;\Omega}^{(-\alpha;\Gamma_{p,z})}\leq
C(\delta^2+\sigma)
\end{equation}
and  the compatibility conditions
\begin{equation}\label{3-27}
\p_{z_2} w_1(z_1,0)=(w_2,\p_{z_2}^2 w_2)(z_1,0)=0,  \ \ \forall z_1\in [L_b,L_2].
\end{equation}
\end{proposition}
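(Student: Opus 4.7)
The plan is to recover $(w_1,w_2)$ from the potential function furnished by Proposition 3.2 and then verify the boundary value problem \eqref{3-8} together with the weighted estimate. First, I would apply Proposition 3.2 to the problem \eqref{3-16}; its data $(\mg_1,\mg_2,\mg_3,\tilde q_2,\tilde q_3,h_4)$ are controlled by $C(\sigma+\delta^2)$ thanks to \eqref{3-14}, which produces a unique axisymmetric $\Psi\in C_{2,\alpha}^{(-1-\alpha;\Gamma_{p,\eta})}(\Omega_1)$ of the same size. Returning to $(z_1,z_2)$ coordinates yields $\phi_\ast\in C_{2,\alpha}^{(-1-\alpha;\Gamma_{p,z})}(\Omega)$. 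The free parameter $\Lambda$ introduced in \eqref{3-13} is then pinned down by $\Lambda:=\phi_\ast(L_b,\me)$, equivalently $\Lambda=-w_5(\me)/a_1$ by \eqref{3-12}, so that $\phi:=\phi_\ast-\Lambda$ obeys the normalization $\phi(L_b,\me)=0$ used in \eqref{3-10}.

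Next I would set $w_2:=\p_{z_2}\phi_\ast$ and
\[
w_1:=\frac{1}{d_2(z_1)}\Big(\p_{z_1}\phi_\ast+d_5(z_1)\tfrac{b_3 b_4}{b_2}\p_{z_1}\phi_\ast(L_b,z_2)+\int_{z_2}^{\me}\!\!\big(G_2(z_1,s)+b_4 d_5(z_1)G_2(L_b,s)\big)\,ds\Big),
\]
following the formula just below \eqref{3-10}. Direct substitution shows that the first equation of \eqref{3-8} coincides with the elliptic equation for $\phi_\ast$ in \eqref{3-15} once the Robin identity \eqref{3-12} is used, while the second equation of \eqref{3-8} is built into the potential construction. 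The four boundary conditions of \eqref{3-8} translate one-to-one from the boundary conditions on $\phi_\ast$ in \eqref{3-15}. The axis condition $w_2(z_1,0)=0$ and the higher-order compatibility conditions \eqref{3-27} come from the axisymmetry of $\phi_\ast$ (equivalently, from the parity of $\phi_\ast$ in $z_2$), together with the symmetric structure of $G_1,G_2$. Uniqueness is immediate: any other pair solving \eqref{3-8} generates a potential satisfying the homogeneous version of \eqref{3-16}, which must vanish by Proposition 3.2.

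The main obstacle, and the step requiring the most care, is the weighted estimate \eqref{3-26}. Differentiating the bound $\|\phi_\ast\|_{2,\alpha;\Omega}^{(-1-\alpha;\Gamma_{p,z})}\leq C(\sigma+\delta^2)$ naively yields only $(w_1,w_2)\in C_{1,\alpha}^{(-\alpha;\Gamma_{p,z})}$, one derivative short of the asserted $C_{2,\alpha}^{(-\alpha;\Gamma_{p,z})}$. I would bridge this gap by applying interior Schauder estimates directly to the first-order elliptic system \eqref{3-8} on balls $B_\rho(z_0)$ with $\rho=\tfrac12\mathrm{dist}(z_0,\Gamma_{p,z})$: the nonlocal trace term $d_4(z_1)\tfrac{b_3}{b_2}w_1(L_b,z_2)$ depends only on $z_2$, and its $C^{1,\alpha}$ norm is controlled by $\|\p_{z_1}\phi_\ast(L_b,\cdot)\|_{C^{1,\alpha}}$ via the weighted trace theorem, so on each ball the system has bona fide $C^{1,\alpha}$ data. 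A standard rescaling argument then produces the $-\alpha$ weight at the $D^2$ level and establishes \eqref{3-26}.
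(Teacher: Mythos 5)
The first half of your plan (recovering $(w_1,w_2)$ from the potential $\phi_\ast$ furnished by Proposition 3.2, pinning down $\Lambda=\phi_\ast(L_b,\me)$, and reading off the boundary conditions) coincides with the paper's argument and gives $(w_1,w_2)\in C_{1,\alpha}^{(-\alpha;\Gamma_{p,z})}(\Omega)$, which is exactly the estimate \eqref{3-28}. You correctly flag that this is one derivative short of the asserted $C_{2,\alpha}^{(-\alpha;\Gamma_{p,z})}$. The gap lies in how you propose to close this.

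Your ``interior Schauder on balls with rescaling'' does not suffice. The target space $C_{2,\alpha}^{(-\alpha;\Gamma_{p,z})}$ only permits degeneration of the H\"older seminorms toward the nozzle wall $\Gamma_{p,z}=\{z_2=\me\}$; the solution is required to be genuinely $C^{2,\alpha}$ up to the shock $\{z_1=L_b\}$, the exit $\{z_1=L_2\}$ and the axis $\{z_2=0\}$. An interior rescaling argument on balls of radius $\tfrac12\mathrm{dist}(z_0,\Gamma_{p,z})$ gives no control at those three boundary pieces, where the problem has a Robin-type condition in the tangential derivative, a nonlocal Dirichlet condition, and a degenerate coefficient $w_2/z_2$, respectively. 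What the paper does instead is eliminate $w_2$ to obtain the scalar second-order equation \eqref{3-30} for $w_1$ with Dirichlet data at $z_1=L_b,L_2$, the Neumann condition at $z_2=0$, and the known trace at $z_2=\me$; a repetition of the Schauder argument from Proposition 3.2 then yields $w_1\in C_{2,\alpha}^{(-\alpha;\Gamma_{p,z})}$, and the mixed second derivatives of $w_2$ are read off from the first-order system in \eqref{3-29}, giving \eqref{3-31}--\eqref{3-32}. You would need to reproduce this second-order reduction, or supply boundary Schauder estimates for the first-order system with these boundary operators, to obtain the claimed weighted estimate.

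The second gap concerns the compatibility condition $\p_{z_2}^2 w_2(z_1,0)=0$ in \eqref{3-27}. You attribute it to ``the axisymmetry of $\phi_\ast$ (equivalently, the parity of $\phi_\ast$ in $z_2$).'' But $w_2=\p_{z_2}\phi_\ast$, so $\p_{z_2}^2 w_2(z_1,0)=\p_{z_2}^3\phi_\ast(z_1,0)$, a third derivative of $\phi_\ast$; this quantity is not controlled by the $C^{2,\alpha}$-regularity of $\phi_\ast$ and is not a formal consequence of parity. In the paper this condition is proved only after the extra regularity $w_1\in C_{2,\alpha}^{(-\alpha;\Gamma_{p,z})}$ is in hand: from the first equation of \eqref{3-29} one solves for $w_2$ as an integral \eqref{3-33}, rewrites it as \eqref{3-34}, and decomposes $\p_{z_2}^2 w_2$ into $I_1+I_2+I_3$, each of which vanishes at $z_2=0$ because $\p_{z_2}R(z_1,0)=0$. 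Your proposal needs this computation (or a comparable one) to justify \eqref{3-27}.
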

  \begin{proof}
  It follows from  Proposition   3.2 and the equivalence between $\|\cdot\|_{1,\alpha;\Omega}^{(-\alpha;\Gamma_{p,z})}$ and  $\|\cdot\|_{1,\alpha;\Omega_1}^{(-\alpha;\Gamma_{p,\eta})}$  that the problem \eqref{3-8} has a unique solution $ ( w_1,w_2)\in \left(C_{1,\alpha}^{(-\alpha;\Gamma_{p,z})}(\Omega)\right)^2 $ satisfying
  \begin{equation}\label{3-28}
  \begin{aligned}
  &\|w_1\|_{1,\alpha,\Omega}^{(-\alpha;\Gamma_{p,z})}+
  \|w_2\|_{1,\alpha,\Omega}^{(-\alpha;\Gamma_{p,z})}\\
   &\leq C\bigg(\sum_{j=1}^2\|G_i(\h{\bf w},\h w_5)\|_{1,\alpha;\Omega}^{(1-\alpha;\Gamma_{p,z})}+ \|h_2\|_{1,\alpha;[0,\me)}^{(1-\alpha;\{\me\})}+
   \|h_3\|_{1,\alpha;[0,\me)}^{(-\alpha;\{\me\})}
 +\|h_4\|_{0,\alpha;[L_b,L_2]}\bigg)
 \\
    &\leq C\left(\sigma+
\delta^2\right).\\
   \end{aligned}
  \end{equation}
  Furthermore,  one can further verify  that
 \begin{equation*}
 w_2(z_1,0)=\p_{z_2}w_1(z_1,0)=0, \ \ \forall z_1\in [L_b,L_2].
 \end{equation*}
 \par Next, we estimate $ \|(w_1,w_2)\|_{2,\alpha,\Omega}^{(-\alpha;\Gamma_{p,z})} $. To this end, we rewrite \eqref{3-8} as
  \begin{equation}\label{3-29}
 \begin{cases}
 \begin{aligned}
&\p_{z_1}(\lambda_1(z_1)w_1)
+\lambda_2(z_1)\bigg(\p_{z_2}w_2+\frac{w_2}{z_2}\bigg)=G_3(\h{\bf w},\h w_5),\\
&\p_{z_1}w_2-d_2(z_1)\p_{z_2}w_1
=G_4(\h{\bf w},\h w_5),\\
&w_1(L_b,z_2)=G_5(z_2), \ \
w_1(L_2,z_2)=G_6(z_2)
,\\
&w_2(z_1,0)=0,\ \
w_2(z_1,\me )=h_4(z_1),\\
 \end{aligned}
\end{cases}
\end{equation}
where
\begin{equation*}
\begin{aligned}
G_3(\h{\bf w},\h w_5)&=\frac{\lambda_0(z_1)}{d_1(z_1)}\bigg(G_1(\h{\bf w},\h w_5)-d_4(z_1)\frac{b_3}{b_2}w_1(L_b,z_2)\bigg),\\
 G_4(\h{\bf w},\h w_5)&=G_2(\h{\bf w},\h w_5)-d_5(z_1)\frac{b_3}{b_2} (a_1b_2 w_2(L_b,z_2)+h_2(z_2)),\\
 G_5(z_2)&=a_1b_2\Lambda+b_2R_1(L_b,\me)-\int_{z_2}^{\me}(a_1b_2 w_2(L_b,s)+h_2(s))\de s,\\
G_6(z_2)&=\frac{b_3}{b_2{u}_{b}(L_2)}w_1(L_b,z_2)+h_3(z_2).
 \end{aligned}
 \end{equation*}
Then $ w_1 $ satisfies
   \begin{equation}\label{3-30}
  \begin{cases}
  \begin{aligned}
  &\p_{z_1}\left(\frac{1}{\lambda_2(z_1)}\p_{z_1}(\lambda_1(z_1)w_1)\right)
  +d_2(z_1)
  \left(\p_{z_2}^2w_1+\frac{1}{z_2}\p_{z_2}w_1\right)\\
  &=\p_{z_1}\left(\frac{G_3(\h{\bf w},\h w_5)}{\lambda_2(z_1)}\right)
  +\p_{z_2}G_4(\h{\bf w},\h w_5)+\frac{G_4(\h{\bf w},\h w_5)}{z_2},\\
 & w_1(L_b,z_2)=G_5(z_2),\quad w_1(L_2,z_2)=G_6(z_2),\\ &\p_{z_2}w_1(z_1,0)=0, \qquad w_1(z_1,\me)=w_1(z_1,\me).
  \end{aligned}
  \end{cases}
\end{equation}
Note that $ G_4(z_1,0)=0 $ for $ z_1\in [L_b,L_2] $. By the Schauder estimate in \cite[Theorem 2.23]{LG13}, there holds
\begin{equation}\label{3-31}
\|w_1\|_{2,\alpha;\Omega}^{(-\alpha;\Gamma_{p,z})}\leq C\left(\sum_{i=3}^{4}\|G_i\|_{1,\alpha;\Omega}^{(1-\alpha;\Gamma_{p,z})}
+\sum_{i=5}^{6}\|G_i\|_{1,\alpha;[0,\me)}^{(-\alpha;\{\me\})}
+\|w_1\|_{1,\alpha,\Omega}^{(-\alpha;\Gamma_{p,z})}
\right)\leq C(\sigma+\delta^2).
\end{equation}
This, together with the second equation in \eqref{3-29}, gives
\begin{equation}\label{3-32}
\begin{aligned}
\|\p_{z_1}^2w_2\|_{0,\alpha;\Omega}^{(2-\alpha;\Gamma_{p,z})}
+\|\p_{z_1z_2}^2w_2\|_{0,\alpha;\Omega}^{(2-\alpha;\Gamma_{p,z})}&\leq C(\|w_1\|_{2,\alpha;\Omega}^{(-\alpha;\Gamma_{p,z})}+\|w_2\|_{1,\alpha;\Omega_+}^{(-\alpha;\Gamma_{p,z})}
+\|G_4\|_{1,\alpha;\Omega}^{(1-\alpha;\Gamma_{p,z})}) \\
&\leq C(\sigma+\delta^2).\\
\end{aligned}
\end{equation}
\par Next, for  $ z_1\in [L_b,L_2] $, we derive that $ \p_{z_2}^2w_2(z_1,0)=0 $. It follows from the first equation in \eqref{3-29} and $ w_2(z_1,0)=0 $ that
\begin{equation}\label{3-33}
w_2(z_1,z_2)=\frac{1}{\lambda_2(z_1)z_2} \int_{0}^{z_2}s(\p_{z_1}(\lambda_1(z_1)w_1(z_1,s)-G_3(z_1,s))\de s.
\end{equation}
Then $ w_2(z_1,z_2) $ can be rewritten as
\begin{equation}\label{3-34}
w_2(z_1,z_2)=\frac{1}{z_2}\int_{0}^{z_2}s(R(z_1,s)-R(z_1,0))\de s+\frac{z_2}{2}R(z_1,0),
\end{equation}
where
\begin{equation*}
 R(z_1,z_2)=\frac{1}{\lambda_2(z_1)} \p_{z_1}(\lambda_1(z_1)w_1(z_1,z_2))-G_3(z_1,z_2).
 \end{equation*}
 Thus one gets $ \p_{z_2}R(z_1,0)=0 $ for $ z_1\in [L_b,L_2] $.  Furthermore, it follows from \eqref{3-34} that
 \begin{equation*}
 \p_{z_2}^2w_2=I_1+I_2+I_3,
 \end{equation*}
 where
 \begin{equation*}
 \begin{aligned}
 I_1=\frac{2}{z_2^3}\int_{0}^{z_2}s(R(z_1,s)-R(z_1,0))\de s,\
 I_2=-\frac{1}{z_2}(R(z_1,z_2)-R(z_1,0)),\
 I_3=\p_{z_2}R(z_1,z_2).\\
 \end{aligned}
 \end{equation*}
 Obviously, $ I_3(z_1,0)=0 $ for $ z_1\in [L_b,L_2] $. In addition,
 \begin{equation*}
  \begin{aligned}
 I_2&=-\frac{1}{z_2}(R(z_1,z_2)-R(z_1,0))=-\int_{0}^1\p_{z_2}R(z_1,s z_2)\de s,\\
 I_1&=\frac{2}{z_2^3}\int_{0}^{z_2}s(R(z_1,s)-R(z_1,0))\de s=
 \frac{2}{z_2^3}\int_{0}^{z_2}\left(\int_0^1\p_{z_2}R(z_1,ts)\de t\right)s^2\de s.\\
  \end{aligned}
  \end{equation*}
   Hence for $ z_1\in [L_b,L_2] $,  $ I_1(z_1,0)=I_2(z_1,0)=0$. That is $ \p_{z_2}^2w_2(z_1,0)=0 $. The proof of Proposition  3.3 is completed.
  \end{proof}
  \par  In the following, we are ready to estimate $w_3$,  $w_4$, and $w_5$.
\begin{proposition}
$w_5$, $w_3$,  and $w_4$ are uniquely determined by \eqref{3-3}, \eqref{3-5} and \eqref{3-7}, which satisfy the following estimate
\begin{equation}\label{3-35}
\|w_5\|_{3,\alpha;[0,\me)}^{(-1-\alpha;\{\me\})}
+\|w_3\|_{2,\alpha;\Omega}^{(-\alpha;\Gamma_{p,z})}
+\|w_4\|_{2,\alpha;\Omega}^{(-\alpha;\Gamma_{p,z})}
\leq
C(\delta^2+\sigma)
\end{equation}
and the compatibility conditions
\begin{equation}\label{3-36}
w_5^\prime(0)=w_5^{(3)}(0)=0, \ \ (w_3,\p_{z_2}w_3)(z_1,0)= \p_{z_2}w_4(z_1,0)
 =0, \ \ \forall  z_1\in [L_b,L_2].
\end{equation}
\end{proposition}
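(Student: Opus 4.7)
The plan is to handle $w_5$ first (since $w_3$ and $w_4$ are expressed directly in terms of $w_5$ and quantities already controlled), then reduce the estimates for $w_3$ and $w_4$ to transport-equation calculations. Throughout I will lean on Proposition 3.3 to control $w_1(L_b,\cdot)$ and the quadratic remainder estimate \eqref{2-21} (together with its analogue for $R_3$) to absorb the nonlinear terms into $C(\sigma+\delta^2)$.

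First I would bound $w_5$ from the defining identity \eqref{3-3}. Proposition 3.3 gives $w_1\in C_{2,\alpha}^{(-\alpha;\Gamma_{p,z})}(\Omega)$ with $\|w_1\|\le C(\sigma+\delta^2)$, so its trace at $z_1=L_b$ lies in $C_{2,\alpha}^{(-\alpha;\{\me\})}([0,\me))$ with the same bound. For the remainder term $R_1$, the estimate \eqref{2-21} and the smoothness of the supersonic solution $\bm\Psi^-$ (Lemma 1.2) give a quadratic bound on $\|R_1(\bm\Psi^-(L_b+\hat w_5,\cdot)-\bm\Psi_b^-(L_b+\hat w_5),\hat{\mathbf w}(L_b,\cdot),\hat w_5)\|$, which is controlled by $C(\sigma^2+\delta^2)$ because $\hat w_5\in C_{3,\alpha}^{(-1-\alpha;\{\me\})}$ and $\hat{\mathbf w}\in C_{2,\alpha}^{(-\alpha;\Gamma_{p,z})}$. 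Differentiating the identity \eqref{3-3} once in $z_2$ yields a $C^{2,\alpha}$ bound on $w_5'$ with the correct weight at $z_2=\me$, giving \eqref{3-35} for $w_5$. The compatibility conditions $w_5'(0)=w_5^{(3)}(0)=0$ follow from the axisymmetry-induced evenness structure: the compatibility \eqref{1-15} on $\bm\Psi^-$ and \eqref{3-27} on $\mathbf w(L_b,\cdot)$ force $R_1(L_b,\cdot)$ and $w_1(L_b,\cdot)$ to be even in $z_2$ up to fourth order, so the odd-order derivatives $w_5'(0)$ and $w_5^{(3)}(0)$ vanish.

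Next, for $w_3$ the explicit formula \eqref{3-5} expresses it as a product of the given boundary trace $u_\theta^-(L_b+\hat w_5(z_2),z_2)$ with a ratio of the positive quantities $\hat r(L_b,z_2)/\hat r(z_1,z_2)$. Regularity of $\hat r$ is inherited from $\hat{\mathbf w}$ and $\hat w_5$ via its defining integral, and $u_\theta^-$ is $C^{2,\alpha}$ by Lemma 1.2. Composition and the chain rule yield $w_3\in C_{2,\alpha}^{(-\alpha;\Gamma_{p,z})}(\Omega)$ with $\|w_3\|\le C(\sigma+\delta^2)$, since $u_\theta^-$ is itself of size $\sigma$. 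The axis compatibility $w_3(z_1,0)=\partial_{z_2}w_3(z_1,0)=0$ is inherited from the corresponding properties of $u_\theta^-$ in \eqref{1-15} once one checks that $\hat r(z_1,z_2)/z_2$ extends smoothly and evenly to $z_2=0$ (which follows from $\hat r(z_1,0)=0$ together with the axis compatibility of $\hat{\mathbf w}$ built into $\mathcal J$).

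Finally, $w_4$ is handled by \eqref{3-7}, which writes $w_4(z_1,z_2)=\frac{b_3}{b_2}w_1(L_b,z_2)+R_4(\cdot)$, independent of $z_1$. The bound $\|w_4\|_{2,\alpha;\Omega}^{(-\alpha;\Gamma_{p,z})}\le C(\sigma+\delta^2)$ follows immediately from the estimate on $w_1(L_b,\cdot)$ together with the quadratic bound on $R_4$ coming from the analogous estimate to \eqref{2-21} for $R_3$ and $R_1$. The compatibility $\partial_{z_2}w_4(z_1,0)=0$ reduces to $\partial_{z_2}w_1(L_b,0)=0$ (supplied by Proposition 3.3) and the evenness in $z_2$ of $R_4$. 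I expect the main obstacle to be the third-order compatibility $w_5^{(3)}(0)=0$: one must verify that differentiating $R_1$ three times in $z_2$ produces no odd-order surviving terms, which requires carefully tracking how the axis vanishing conditions for $\hat{\mathbf w}$ and $\bm\Psi^-$ propagate through the algebraic expression for $R_{11}$ and $R_{12}$ and their combination in $R_1$. All other pieces are bookkeeping on top of the estimates already supplied by Propositions 3.2 and 3.3.
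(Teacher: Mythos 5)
Your treatment of $w_3$ and $w_4$ is essentially correct and matches the paper's argument: $w_3$ comes directly from the explicit transport formula \eqref{3-5}, and $w_4$ from \eqref{3-7} together with the quadratic bounds on the remainders $R_1$, $R_3$, $R_4$; the axis compatibilities are inherited exactly as you describe.

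The gap is in your treatment of $w_5$. You claim that ``differentiating the identity \eqref{3-3} once in $z_2$ yields a $C^{2,\alpha}$ bound on $w_5'$,'' but this does not work. The identity \eqref{3-3} gives $w_5 = \frac{1}{b_2}w_1(L_b,\cdot) + (\text{remainder})$ with $w_1\in C_{2,\alpha}^{(-\alpha;\Gamma_{p,z})}(\Omega)$, so its trace at $z_1=L_b$ is only $C_{2,\alpha}^{(-\alpha;\{\me\})}$. Differentiating once in $z_2$ therefore \emph{loses} a derivative and only yields $w_5'\in C_{1,\alpha}^{(1-\alpha;\{\me\})}$, hence $w_5\in C_{2,\alpha}^{(-\alpha;\{\me\})}$, which falls one order short of the required $C_{3,\alpha}^{(-1-\alpha;\{\me\})}$. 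The missing ingredient is the independent Rankine--Hugoniot relation \eqref{3-a} (i.e. the transformed third RH condition \eqref{2-29}), which expresses $w_5'(z_2)$ algebraically in terms of $w_2(L_b,z_2)$ and $h_1$, both of which are $C_{2,\alpha}^{(-\alpha)}$ in $z_2$. This is what gains the extra derivative. Equivalently, one could differentiate \eqref{3-3} and then substitute the boundary condition $\p_{z_2}w_1(L_b,z_2)=a_1 b_2\, w_2(L_b,z_2)+h_2(z_2)$ from \eqref{3-8}, but in either case the use of that relation must be made explicit; without it, your chain of estimates terminates a full order of regularity too low. The compatibility conditions $w_5'(0)=w_5^{(3)}(0)=0$ are also cleanest to read off from \eqref{3-a} using $w_2(L_b,0)=\p_{z_2}^2 w_2(L_b,0)=0$ from Proposition~3.3 and the axis-vanishing of $h_1$, rather than from the evenness argument you sketch for \eqref{3-3}, which is harder to carry out and not what the paper does.
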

\begin{proof}
It follows from \eqref{3-3} that
\begin{equation}\label{3-37}
\begin{aligned}
 \|w_5\|_{2,\alpha;[0,\me)}^{(-\alpha;\{\me\})}&\leq C\left(\|w_1\|_{2,\alpha;\Omega}^{(-\alpha;\Gamma_{p,z})}+ \|R_1(\bm{\Psi}^-(L_b+\h w_5,z_2) - {\bm{\Psi}}_b^-(L_b+\h w_5),\h{\bf  w}(L_b,z_2), \h w_5)\|_{2,\alpha;\Omega}^{(-\alpha;\Gamma_{p,z})}\right)\\
&\leq C\left(
\sigma+\delta^2\right).
\end{aligned}
\end{equation}
Meanwhile, one can follow from \eqref{3-a} to derive that
\begin{equation}\label{3-38}
\begin{aligned}
\|w_5^{\prime}\|_{2,\alpha;[0,\me)}^{(-\alpha;\{\me\})}&\leq C\left(\|w_2\|_{2,\alpha;\Omega}^{(-\alpha;\Gamma_{p,z})}+ \|h_1(\bm{\Psi}^-(L_b+\h w_5,z_2) - {\bm{\Psi}}_b^-(L_b+\h w_5),\h{\bf  w}(L_b,z_2), \h w_5)\|_{2,\alpha;\Omega}^{(-\alpha;\Gamma_{p,z})}\right)\\
&\leq C\left(
\sigma+\delta^2\right).
\end{aligned}
\end{equation}
Furthermore, using \eqref{1-13}, \eqref{3-27} and the explicit expression of $ h_1 $, one can verify that
\begin{equation*}
 w_5^\prime(0)=w_5^{(3)}(0)=0.
 \end{equation*}
 Next, \eqref{3-5} gives that
 \begin{equation}\label{3-40}
(w_3, \p_{z_2}w_3)(z_1,0)=0, \ \ \forall  z_1\in [L_b,L_2] \ \ {\rm{and}}\quad\|w_3\|_{2,\alpha;\Omega}^{(-\alpha;\Gamma_{p,z})}\leq C\delta\sigma .
\end{equation}
Fianlly, it follows from \eqref{3-7}  that the following estimate and compatibility condition hold:
\begin{equation}\label{3-41}
\begin{aligned}
\|w_4\|_{2,\alpha;\Omega}^{(-\alpha;\Gamma_{p,z})}&\leq C\left(\|w_1\|_{2,\alpha;\Omega}^{(-\alpha;\Gamma_{p,z})}+ \|R_4(\bm{\Psi}^-(L_b+\h w_5,z_2) - {\bm{\Psi}}_b^-(L_b+\h w_5),\h{\bf w}(L_b,z_2), \h w_5)\|_{2,\alpha;\Omega}^{(-\alpha;\Gamma_{p,z})}\right)\\
&\leq C\left(\sigma+\delta^2\right),
\end{aligned}
\end{equation}
and
\begin{equation*}
 \p_{z_2}w_4(z_1,0)=0, \ \ \forall  z_1\in [L_b,L_2].
 \end{equation*}
 Combining  \eqref{3-37}-\eqref{3-41} together finishes the proof of the proposition.
\end{proof}
\subsection{Proof of Theorem 2.2}\noindent
\par Now, we start to prove Theorem 2.2. The proof is divided into two steps.
\par {\bf Step 1.  The  boundedness of the operator $\mathcal{T}$.}
\par Given any $ (\hat{\bf{w}},\hat w_5)\in  \mj $, we  define a mapping $ \mt $ as follows
\begin{equation}\label{3-42}
\mt(\hat{\bf{w}},\hat w_5)=({\bf{w}}, w_5),
 \end{equation}
 where  $ ({\bf{w}}, w_5) $ is the solution obtained in Proposition 3.3 and 3.4. Combining \eqref{3-26} and \eqref{3-35}, one derives that
 \begin{equation}\label{3-43}
 \|({\bf{w}}, w_5)\|_{\mj}\leq C_\ast(\sigma+\delta^2).
 \end{equation}
   Setting $\delta=2C_\ast\sigma $ and choosing $ \sigma_0 $ small enough such that  $ 2C_\ast^2\sigma_0\leq\frac{1}{2} $. Then for any $ 0<\sigma<\sigma_0 $, $ C_\ast(\sigma+\delta^2)=\frac{\delta}{2}+2C_\ast^2\sigma\delta<\delta $, hence $ \mt $ maps $ \mj $ into itself.
\par {\bf Step 2. The contraction of the operator $\mathcal{T}$.}
 \par For any two elements $(\hat{{\bf w}}^i, \hat{w}_5^i), i=1,2$, define $({\bf w}^i, w_5^i)=\mathcal{T}(\hat{{\bf w}}^i, \hat{w}_5^i)$ for $i=1,2$. Denote
\begin{equation*}
(\hat{{\bf k}},\h k_5)= (\hat{{\bf w}}^{(1)},\h w_5^{(1)})-(\hat{{\bf w}}^{(2)},\h w_5^{(2)})\quad \text{and}\quad  ({{ \bf k}}, k_5)= ({{\bf w}}^{(1)}, w_5^{(1)})-({{\bf w}}^{(2)},w_5^{(2)}).
\end{equation*}
\par It follows from \eqref{3-8} that $ k_1 $ and $ k_2 $ satisfy
\begin{equation}\label{3-44}
\begin{cases}
\begin{aligned}
&d_1(z_1)\p_{z_1}k_1
+d_2(z_1)\bigg(\p_{z_2}k_2+\frac{k_2}{z_2}\bigg)+ d_3(z_1)k_1+d_4(z_1)\frac{b_3}{b_2}k_1(L_b,z_2)=G_1({{\bf w}}^{(1)}, w_5^{(1)})-G_1({{\bf w}}^{(2)},w_5^{(2)}),\\
&\p_{z_1}k_2-d_2(z_1)\p_{z_2}k_1
+d_5(z_1)\frac{b_3}{b_2}\p_{z_2}k_1(L_b,z_2)=G_2({{\bf w}}^{(1)}, w_5^{(1)})-G_2({{\bf w}}^{(2)},w_5^{(2)}),\\
&\p_{z_2}k_1(L_b,z_2)=a_1b_2 k_2(L_b,z_2)+h_2^{(1)}(z_2)-h_2^{(2)}(z_2),\\
&k_1(L_2,z_2)=
\frac{b_3}{b_2{u}_{b}(L_2)}k_1(L_b,z_2)+h_3^{(1)}(z_2)-h_3^{(2)}(z_2),\\
&k_2(z_1,0)=0,\\
&k_2(z_1,\me )=h_4^{(1)}(z_1)-h_4^{(2)}(z_1),\\
\end{aligned}
\end{cases}
\end{equation}
Then  Proposition 3.3 gives that
\begin{equation}\label{3-45}
\begin{aligned}
 \sum_{j=1}^2\|k_j\|_{2,\alpha;\Omega}^{(-\alpha;\Gamma_{p,z})}
 &\leq
C\bigg(\sum_{j=1}^2\|G_j(\h{\bf w}^{(1)},\h w_5^{(1)})-G_i(\h{\bf w}^{(2)},\h w_5^{(2)})\|_{1,\alpha;\Omega}^{(1-\alpha;\Gamma_{p,z})}
+ \|h_2^{(1)}-h_2^{(2)}\|_{1,\alpha;[0,\me)}^{(1-\alpha;\{\me\})}\\
&\qquad+\|h_3^{(1)}-h_3^{(2)}\|_{1,\alpha;[0,\me)}^{(-\alpha;\{\me\})}
 +\|h_4^{(1)}-h_4^{(2)}\|_{0,\alpha;[L_b,L_2]}\bigg)\\
 &\leq C\sigma
 \|(\hat{{ \bf k}},\h k_5)\|_{\mj}.
 \end{aligned}
\end{equation}
\par Next, it follows \eqref{3-3} and \eqref{3-a} that
\begin{equation}\label{3-46}
\begin{cases}
\begin{aligned}
k_5 (z_2) &= \frac{1}{b_2}k_1(L_b,z_2)- \frac{1}{b_2} (R_1^{(1)}-R_1^{(2)}),\\
k_5^{\prime}(z_2)
&= a_1 k_2(L_b,y_2) +h_1^{(1)}-h_1^{(2)}.
\end{aligned}
\end{cases}
\end{equation}
Thus one can derive
\begin{equation}\label{3-47}
\begin{aligned}
 \|k_5\|_{3,\alpha;[0,\me)}^{(-1-\alpha;\{\me\})}
&\leq C\left(
 \|k_1\|_{2,\alpha;\Omega}^{(-\alpha;\Gamma_{p,z})}
 +\|k_2\|_{2,\alpha;\Omega}^{(-\alpha;\Gamma_{p,z})}
+\|R_1^{(1)}-R_1^{(2)}\|_{2,\alpha;\Omega}^{(-\alpha;\Gamma_{p,z})}
+\|h_1^{(1)}-h_1^{(2)}\|_{2,\alpha;\Omega}^{(-\alpha;\Gamma_{p,z})}\right)\\
&\leq C\sigma
 \|(\hat{{ \bf k}},\h k_5)\|_{\mj}.
 \end{aligned}
\end{equation}
\par Finally, \eqref{3-5}and \eqref{3-7} yield that
\begin{equation}\label{3-48}
\begin{cases}
\begin{aligned}
k_3(z_1,z_2)&=\frac{\hat r^{(1)}(L_b,z_2) u_{\theta}^-(L_b+\h w_5^{(1)}(z_2),z_2)}
{\hat r^{(1)}(z_1,z_2)}-\frac{\hat r^{(2)}(L_b,z_2) u_{\theta}^-(L_b+\h w_5^{(2)}(z_2),z_2)}
{\hat r^{(2)}(z_1,z_2)}\\
k_4(z_1,z_2)&=\frac{b_3}{b_2} k_1(L_b,z_2)+R_4^{(1)}-R_4^{(2)}.
\end{aligned}
\end{cases}
\end{equation}
Hence it holds that
\begin{equation}\label{3-49}
\|k_3\|_{2,\alpha;\Omega}^{(-\alpha;\Gamma_{p,z})}
 +\|k_4\|_{2,\alpha;\Omega}^{(-\alpha;\Gamma_{p,z})}
\leq C\sigma
 \|(\hat{{ \bf k}},\h k_5)\|_{\mj}.
\end{equation}
Combining all the above estimates, one can conclude that
\begin{equation}\label{3-50}
\|({{ \bf k}}, k_5)\|_{\mj}\leq C_\sharp\sigma\|(\hat{{ \bf k}},\h k_5)\|_{\mj}.
\end{equation}
Choosing $ \sigma_0\leq \min\{\frac{1}{4C_\ast^2},\frac{1}{2C_\sharp}\} $, then if $ 0<\sigma<\sigma_0 $,  $ \|({{ \bf k}}, k_5)\|_{\mj}\leq \frac12\|(\hat{{ \bf k}},\h k_5)\|_{\mj} $ so that the mapping $\mathcal{T}$ is a contraction operator in the  norm $\|\cdot\|_{\mj}$.  Thus there exists a unique fixed point $({\bf w},w_5)\in \mathcal{J}$ such that $\mathcal{T}({\bf w},w_5)=({\bf w},w_5)$. It is easy to see that this fixed point is the solution for the problem \eqref{2-31}-\eqref{2-32} and \eqref{2-33} with boundary conditions \eqref{2-29}, \eqref{2-30}, \eqref{2-34} and \eqref{2-35}.
\par
Since the modified Lagrangian transformation is invertible, thus the soultion   transformed back in $(x,r) $-coordinates     satisfies the properties   \eqref{1-17}-\eqref{1-20} in   Theorem 1.3. The proof of
  Theorem 1.3 is completed.
  \par {\bf Acknowledgement.} The research of Zihao Zhang is supported by  the Postdoctoral Fellowship Program of CPSF under Grant Number GZB20250719.
  \par {\bf Data availability.} No data was used for the research described in the article.
    \par {\bf Conflict of interest.} This work does not have any conflicts of interest.

\end{document}